\documentclass[11pt,reqno]{amsart}

\usepackage[a4paper,margin=1.08in]{geometry}
\usepackage{amsmath,amssymb,amsthm,mathtools}
\usepackage{microtype}
\usepackage{enumitem}
\usepackage{booktabs}
\usepackage[hidelinks]{hyperref}
\hypersetup{
	pdftitle={Generalized Erdos--Ginzburg--Ziv Constants in Rank Three and Elementary Rank Four},
	pdfsubject={Exact generalized EGZ constants in rank three and elementary rank four},
	pdfkeywords={zero-sum sequence, generalized EGZ constant, finite abelian group}
}

\numberwithin{equation}{section}

\newtheorem{theorem}{Theorem}[section]
\newtheorem{proposition}[theorem]{Proposition}
\newtheorem{lemma}[theorem]{Lemma}
\newtheorem{conjecture}[theorem]{Conjecture}
\newtheorem{corollary}[theorem]{Corollary}

\theoremstyle{definition}

\theoremstyle{remark}

\newcommand{\F}{\mathbb F}

\newcommand{\Span}{\operatorname{span}}

\newcommand{\bdot}{\boldsymbol{\cdot}}

\title[Generalized EGZ Constants]{Generalized Erd\H{o}s--Ginzburg--Ziv Constants for finite abelian groups of rank three}

\date{}
\author{Hanbin Zhang}
\address{School of Mathematics (Zhuhai), Sun Yat-sen University, Zhuhai 519082, Guangdong, P.R. China}
\email{zhanghb68@mail.sysu.edu.cn}

\author{Shiwen Zhang}
\address{Department of Mathematics, Southern University of Science and Technology, Shenzhen 518055, Guangdong, P.R. China}
\email{12431019@mail.sustech.edu.cn}

\begin{document}
	
	\begin{abstract}
		Let $G$ be a finite abelian group with exponent $\exp(G)$. For positive integer $k$, the generalized Erd\H{o}s--Ginzburg--Ziv constant, denoted by $\mathsf s_{k\exp(G)}(G)$
		is the least integer $\ell$ such that every sequence of
		length at least $\ell$ over $G$ has a zero-sum subsequence of length $k\exp(G)$.
		In this paper, we prove that, for any $n\ge 1$,
		\[
		\mathsf s_{3\cdot 2^n}(C_{2^n}^3)=6\cdot 2^n-3
		\quad\text{and}\quad
		\mathsf s_{4\cdot 3^n}(C_{3^n}^3)=7\cdot 3^n-3.
		\]
		This completes all the remaining cases in the study of $\mathsf s_{kq}(C_q^3)$, where $q$ is a prime power and $k\ge 3$. As a consequence, for every $k\ge 3$, we have 
		\[
		\mathsf s_{kn}(C_n^3)=(k+3)n+O(\frac{n}{\log n}).
		\]
		This verifies a conjecture of Kubertin asymptotically. Our approach combines the previous group ring techniques and a recent powerful generalization of the Chevalley-Warning and Ax-Katz theorems introduced by Grynkiewicz
		
	\end{abstract}
	
	\maketitle
	
	\section{Introduction}

	Let $G$ be a finite abelian group, written additively, with exponent
	$\exp(G)=n$.  Let $S=g_1\bdot\ldots\bdot g_k$ be a sequence over $G$ (unordered,  repetition is allowed, and $g_i\in G$ for $1\le i\le k$), where $k$ is called the length of $S$. We call $S$ a zero-sum sequence if $\sum^k_{i=1}g_i=0$, the identity of $G$. As an active branch of additive combinatorics,
	zero-sum theory studies various properties on zero-sum subsequences (existence and structure); see \cite{GaoGeroldinger}. In 1961, Erd\H{o}s, Ginzburg and Ziv \cite{EGZ} proved that from every sequence $S$ over $C_n$ of length $2n-1$, we can always find a zero-sum subsequence $T$ of length $n$ (see \cite{ADZ} for other different proofs). This classical result is one of the starting point of zero-sum theory. For any positive integer $k$, the $k$-th Erd\H{o}s-Ginzburg-Ziv constant $\mathsf s_{kn}(G)$ is defined as the smallest positive integer $t$ such that every sequence $S$ over $G$ of length at least $t$ has a zero-sum subsequence of length $kn$. In particular, for $k=1$, $\mathsf s(G):=\mathsf s_n(G)$ is called the Erd\H{o}s-Ginzburg-Ziv constant. The invariants $\mathsf s_{kn}(G)$ have been extensively studied (see, e.g., \cite{GHPS,GHZ,GHP,GT,Grynkiewicz,HZ18,HZ19,HZ192,He,Kubertin,Reiher,Zhang}) and are closely related to coding theory \cite{Sidorenko} and graph theory \cite{Sidorenko1}. Another important objects in zero-sum theory is the Davenport constant $\mathsf D(G)$, which is defined as the smallest positive integer $t$ such that every sequence $S$ over $G$ of length at least $t$ has a nonempty zero-sum subsequence. We refer to \cite{HanZhang2026,Wang2026} for some recent studies and the references therein.
	
	If $T$ is a zero-sum free sequence of length $\mathsf D(G)-1$, then
	\(
	0^{[kn-1]}T
	\)
	has length $kn+\mathsf D(G)-2$ and has no zero-sum subsequence of length $kn$.
	Therefore
	\begin{equation}\label{eq:standard-lower}
		\mathsf s_{kn}(G)\ge kn+\mathsf D(G)-1.
	\end{equation}
	In 1996, Gao \cite{Gao3} proved that $\mathsf s_{kn}(G)= kn+\mathsf D(G)-1$, provided that $kn\ge|G|$. In \cite{GT}, Gao and Thangadurai showed that if $kn<\mathsf D(G)$, then $\mathsf s_{kn}(G)> kn+\mathsf D(G)-1$. Moreover, they define $\ell(G)$ be the smallest integer $t$ such that
	$$\mathsf s_{kn}(G)=kn+\mathsf D(G)-1$$
	holds for every $k\ge t$. From the above we know that $\frac{\mathsf{D}(G)}{n}\le \ell(G)\le \frac{|G|}{n}.$
	Recently, Gao, Han, Peng and Sun (\cite{GHPS}, Conjecture 4.7)  conjectured that $\ell(G)=\lceil\frac{\mathsf D(G)}{n}\rceil$, for any finite abelian group $G$ with $\exp(G)=n$. For a prime power $q$, Olson \cite{Olson} proved that
	\(
	\mathsf D(C_q^r)=r(q-1)+1.
	\)
	So \eqref{eq:standard-lower} becomes
	\begin{equation}\label{eq:homocyclic-lower}
		\mathsf s_{kq}(C_q^r)\ge (k+r)q-r.
	\end{equation}
	Kubertin \cite{Kubertin} proposed the following conjecture. 
	\begin{conjecture}\label{Kub}
		Let $q$ be a prime power, then
		$$\mathsf s_{kq}(C_q^r)= (k+r)q-r$$
		holds for $k\ge r$.
	\end{conjecture}
	There are several important progress towards this conjecture (see, e.g., \cite{GHPS,GHP,GT,Grynkiewicz,HZ18,HZ19,HZ192,He,Kubertin,Zhang}), but it remains widely open for abelian groups of rank at least 3. In particular, for abelian groups of rank 3, previous techniques involving group rings do not work for the case $q=2^n$ and $3^n$.
	
	In this paper, we complete all the remaining cases for Conjecture \ref{Kub} for $r=3$. Our approach combines the previous group ring techniques and a recent powerful generalization of the Chevalley-Warning and Ax-Katz theorems introduced by Grynkiewicz \cite{Grynkiewicz}.
	
	\begin{theorem}\label{thm:intro-dyadic}
		For every $n\ge 1$,
		\[
		\mathsf s_{3\cdot 2^n}(C_{2^n}^3)=6\cdot 2^n-3.
		\]
	\end{theorem}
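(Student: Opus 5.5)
The lower bound $\mathsf s_{3\cdot 2^n}(C_{2^n}^3)\ge 6\cdot 2^n-3$ is the case $k=r=3$, $q=2^n$ of \eqref{eq:homocyclic-lower}. So the whole task is the upper bound: every sequence $S$ over $G=C_{2^n}^3$ of length $6\cdot 2^n-3$ should have a zero-sum subsequence of length $3\cdot 2^n$. I plan to do this by induction on $n$, combined with a direct algebraic argument at the base and inside the inductive step.

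For the inductive skeleton I use the standard multiplicative reduction. Let $\varphi: C_{2^n}^3\to C_{2^{n-1}}^3$ be multiplication by $2$ (or the quotient by $C_2^3$), with kernel $K\cong C_2^3$. I repeatedly extract subsequences $T_1,\dots,T_m$ of $S$, each of length $2^{n-1}$ or $2\cdot 2^{n-1}$, whose images have zero sum. This uses the known values $\mathsf s(C_{2^{n-1}}^3)$ and $\eta(C_{2^{n-1}}^3)$ for the small blocks, and the inductive hypothesis on $\mathsf s_{3\cdot 2^{n-1}}$ for larger blocks. The sums $\sigma(T_i)$ then lie in $K\cong C_2^3$. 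It remains to choose blocks of total length $3\cdot 2^n$ whose kernel sums add to $0$, which is a problem about $\mathsf s_{6}$-type or weighted constants of $C_2^3$. The bookkeeping works out only if I can produce at least $\mathsf s_{k'}(C_2^3)$ blocks, where $k'$ is the number of blocks required. Since $\mathsf s_{2\cdot 3}(C_2^3)$ is small ($\mathsf s_{2k}(C_2^3)=2k+3$ for $k\ge 2$), I need about $9$ blocks of length $2^{n-1}$. That count is roughly $4.5\cdot 2^n$ elements extracted from $6\cdot 2^n-3$ elements, so a naive greedy count fails, and I expect the slack to come from blocks of flexible length.

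The main obstacle, and the reason the group ring method alone fails for $q=2^n$, is exactly this counting deficit. I would handle it with Grynkiewicz's generalized Chevalley--Warning/Ax--Katz theorem, which counts solutions modulo $2$ of systems of polynomial congruences modulo $2^n$. Concretely, write $S=g_1\bdot\ldots\bdot g_\ell$ with $g_i=(a_i,b_i,c_i)\in(\mathbb Z/2^n)^3$, and take variables $x_i\in\{0,1\}$. Impose $\sum x_ia_i\equiv\sum x_ib_i\equiv\sum x_ic_i\equiv 0 \pmod{2^n}$, and require $\sum x_i\equiv 0\pmod{2^n}$ to fix the length. Grynkiewicz's theorem counts solutions modulo $2$ when $\ell$ exceeds the weighted degree sum $\sum_j (2^{n}-1)\cdot(\text{degree}) $. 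That degree sum is $4(2^n-1)=4\cdot 2^n-4$. So once $\ell\ge 4\cdot2^n-3$ the number of solutions is even, and the solution $x=0$ forces a nonzero solution. This gives zero-sum subsequences with lengths in $\{2^n,2\cdot2^n,\dots\}$.

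Next I must pin the length to exactly $3\cdot 2^n$. I will add a constraint $\sum x_i\equiv 3\cdot 2^n$ modulo $2^{n+2}$, which the generalized theorem allows through its binomial-digit formulation. Equivalently, I count solutions of $\binom{\sum x_i}{2^n}$-type conditions, raising the degree budget to about $6\cdot 2^n-4$, which matches $\ell=6\cdot 2^n-3$ exactly. The delicate step is then showing the solution count is odd, or that the counts for lengths $2^n$, $2\cdot 2^n$ and $3\cdot 2^n$ cannot all be even in a way that excludes length $3\cdot 2^n$. To do this I will compute the parity through the group-ring identity: expand $\prod_i(1+X^{g_i}Y)$ in $\mathbb F_2[G][Y]$ and use that $(1+X^g)^{2^n}=0$ in $\mathbb F_2[G]$. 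This relates the counts of lengths $2^n,2\cdot2^n,3\cdot 2^n$ modulo $2$. I would expect a clean contradiction in the extremal case where every length-$2\cdot 2^n$ zero-sum subsequence's complement avoids length $2^n$, and I would handle the remaining cases using the inductive structure above.
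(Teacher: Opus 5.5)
Your proposal has a genuine gap: the upper bound is never established, and neither mechanism you describe closes it. You already observe that the block-extraction induction through $C_{2^n}^3\to C_{2^{n-1}}^3$ with kernel $C_2^3$ falls short, and the hoped-for slack from flexible block lengths is never produced.

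The algebraic step fails as well. Evenness of the number of $0/1$ solutions with $\sum x_i\equiv 3\cdot 2^n\pmod{2^{n+2}}$ is consistent with there being none, since $x=0$ is not a solution. What the weighted theorem actually gives at length exactly $6q-3$ (Lemma~\ref{lem:binomial-selector} with $p=2$, $k=3$) is $1+N_{3q}+N_{4q}\equiv0\pmod2$, and this holds with $N_{3q}=0$ and $N_{4q}$ odd.

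More systematically, suppose $N_{3q}(S)=0$. Olson's congruence, as in Lemma~\ref{lem:dyadic-no-q}, gives $N_q(S)=N_{5q}(S)=0$. The congruences of Theorem~\ref{thm:weighted-moments} (and its integer-valued form) for the lengths $jq$, with $(m,t)=(4,1),(2,2),(0,3)$, then reduce to
\[
N_{2q}\equiv2\pmod4,\qquad N_{4q}\equiv1\pmod2,\qquad 1+N_{2q}+N_{4q}\equiv0\pmod{32}.
\]
These are mutually consistent, for example $N_{2q}=2$, $N_{4q}=29$. Translating to $\sigma(S)=0$ and using complementation (the case $\alpha=q-3$) adds nothing new. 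So, unlike the ternary multiplier-four case, parity and Ax--Katz counting yields no contradiction here. The ``remaining cases'' you defer to the inductive structure are the whole difficulty. You also leave the base of the induction unspecified; the paper uses Sidorenko's $\mathsf s_6(C_2^3)=9$ for $n=1$.

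The missing idea is a different reduction combined with a strengthened induction hypothesis. The paper reduces modulo $2$ onto $C_2^3$, not onto $C_{2^{n-1}}^3$. It pairs terms with equal residue, divides each pair sum by $2$ to obtain a sequence $B$ over $C_q^3$, and keeps the set $O$ of unpaired residues. A $6q$-term zero sum in $C_{2q}^3$ is then assembled from a few unpaired terms with prescribed residue sum, plus a prescribed number of complete pairs that absorb the carry. This requires that sumsets such as $\Sigma_{3q-2}(B)$ and $\Sigma_{3q-3}(B)$ cover $C_q^3$, or miss at most one point.

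Accordingly, the paper proves $E(q)$ simultaneously with the covering statements $P(q),R(q),H(q)$, for sequences of lengths $6q-4,\dots,6q-7$ having no $q$- or $3q$-term zero sum. It checks all four by hand for $q=4$ and lifts them together from $q$ to $2q$ (Theorem~\ref{thm:simultaneous-lifting}). The tools are $\F_2^3$ geometry, the Fano separation Lemma~\ref{lem:fano-separation-general}, and the affine-plane obstruction Corollary~\ref{cor:affine-plane-all}. Already the lift of $E$ needs $P(q)$, so a single-statement induction on $\mathsf s_{3\cdot2^n}$ cannot run. The only algebraic input is Olson's congruence, used to force $N_q=0$.
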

	
	
	\begin{theorem}\label{thm:intro-ternary}
		For every $n\ge 1$, 
		\[
		\mathsf s_{4\cdot 3^n}(C_{3^n}^3)=7\cdot 3^n-3.
		\]
	\end{theorem}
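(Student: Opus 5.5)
The lower bound $\mathsf s_{4\cdot 3^n}(C_{3^n}^3)\ge 7\cdot 3^n-3$ is already given by \eqref{eq:homocyclic-lower} with $k=4$, $r=3$, $q=3^n$. So the whole task is the upper bound: every sequence $S$ over $G=C_{3^n}^3$ of length $7q-3$, where $q=3^n$, has a zero-sum subsequence of length $4q$. I would argue by induction on $n$, using the usual multiplicativity framework. Let $\varphi\colon C_{3^n}^3\to C_{3^{n-1}}^3$ be multiplication by $3$, with kernel $H\cong C_3^3$. The plan is to peel off, via the EGZ-type constant of $H$ or of the quotient, many disjoint subsequences whose sums lie in a small subgroup, and then to apply a zero-sum statement of prescribed length to those sums. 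Since the exact constant $7\cdot3^n-3$ does not obviously factor as a product of the two smaller constants, a naive inductive step loses an additive constant. Closing that gap is exactly where the Grynkiewicz generalization of Chevalley--Warning/Ax--Katz enters.

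\textbf{Base case $n=1$.} Here $G=\F_3^3$ and we need: every sequence of length $18$ over $\F_3^3$ has a zero-sum subsequence of length $12$. I would encode a subsequence by variables $x_i\in\{0,1\}$, viewed in $\F_3$, or rather in $\mathbb Z/9$ via the Grynkiewicz weighted version. The required conditions are then $\sum x_i g_i=0$ (three linear equations) and $\sum x_i\equiv 12 \pmod{9\text{ or }27}$, where the length condition is handled as congruences on elementary symmetric functions of the $x_i$ modulo powers of $3$. Grynkiewicz's theorem bounds the $3$-adic valuation of the number of solutions with these constraints. Once we know the number of solutions of length exactly $12$ is $\not\equiv 0$ modulo an appropriate power of $3$, a nontrivial such solution exists. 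The empty set is excluded by the length constraint, and lengths $3,6,9,15,18$ are excluded using divisibility of binomial coefficients by the Lucas theorem. Concretely, I would count $N_\ell=\#\{T\mid S:|T|=\ell,\ \sigma(T)=0\}$ for $\ell\equiv 0\pmod 3$. Then I would derive linear congruences among the $N_\ell$ modulo $3$ and $9$, in the style of Kubertin/Gao group-ring identities, and show $N_{12}\ne 0$.

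\textbf{Inductive step.} Assume the result for $3^{n-1}$, together with the known values $\mathsf s_{k\cdot 3^{n-1}}(C_{3^{n-1}}^3)$ for the neighbouring $k$; for $k\ge5$ these come from the earlier literature, which handles everything except the exceptional cases. Given $S$ of length $7q-3$, I would repeatedly extract disjoint subsequences $T_1,\dots,T_m$, each of length $3$, with $\sigma(T_j)\in H$, using $\mathsf s(C_3^3)=19$ on $\varphi$-images. Actually it is better to work the other way: extract blocks of length $q/3$ whose image in the quotient is zero, using the EGZ constants of $C_{3^{n-1}}^3$. The block sums then lie in $H\cong C_3^3$, and we need $12$ of them (total length $4q$) with zero sum in $C_3^3$. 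Counting shows roughly $7\cdot3^n-3-(\text{const})\cdot 3^{n-1}$ elements let us extract $18$ blocks only if the leftover threshold is exactly right. This is where the inductive step needs the sharper input: a zero-sum-of-length-$12$ statement for sequences of $18$ elements of $\F_3^3$ with extra structure. Alternatively, one can apply the Grynkiewicz mixed-modulus theorem directly on $\mathbb Z/3^n$-valued constraints and avoid the block decomposition.

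\textbf{Main obstacle.} The hardest step is the counting in the base case and its lifting: showing that the relevant count of zero-sum subsequences of length $4q$ is nonzero modulo $3$. The earlier group-ring method fails precisely because certain binomial coefficients $\binom{4q}{j}$ vanish modulo $3$ when $q=3^n$, so the system of congruences degenerates. To handle this, I would first use Grynkiewicz's weighted theorem with the length constraint imposed modulo $9q$ instead of $3q$. That yields the extra congruence needed to separate $N_{4q}$ from $N_q$ and $N_{7q}$. I would then combine it with the group-ring identity $\prod(1-X^{g_i})\in I^{\,\cdot}$ to eliminate the remaining unknowns.
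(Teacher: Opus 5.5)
Your proposal has a genuine gap: it is a plan rather than a proof, and the inductive step it relies on is not closed. You say so yourself for the inductive step, and both block decompositions you mention fail by a definite margin. Blocks of length $q/3$ with sums in $\ker\varphi\cong C_3^3$ need $17\cdot\frac q3+\mathsf s(C_{q/3}^3)=\frac{26q}{3}-8$ terms. Blocks of length $3$ with sums in $3G\cong C_{q/3}^3$, fed into your induction hypothesis, need $3\bigl(\frac{7q}{3}-4\bigr)+\mathsf s(C_3^3)=7q+7$ terms. Saying that Grynkiewicz's theorem ``closes the gap'' names no mechanism.

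The base case is equally unspecified. You write ``derive linear congruences \dots\ and show $N_{12}\ne0$'' without saying which congruences, or why they are inconsistent. It also contains a wrong step: zero sums of lengths $6,9,15$ in a length-$18$ sequence over $\F_3^3$ cannot be ``excluded'' by Lucas' theorem. They genuinely occur, and you must determine their counts modulo $3$ rather than kill them. Only the lengths $q$ and $6q$ can be shown to vanish, and that needs zero-sum inputs, not binomial divisibility. Your instinct to count modulo $9$ with Grynkiewicz's weighted theorem is right, but on its own it does not finish.

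The paper needs no induction; the same argument works for every $q=3^n$, including $q=3$. Assume $|S|=7q-3$ and $N_{4q}(S)=0$.
\begin{enumerate}[label=(\roman*)]
\item $N_q(S)=0$. The complement of a $q$-term zero sum has length $6q-3$, so the known value $\mathsf s_{3q}(C_q^3)=6q-3$ gives a disjoint $3q$-term zero sum.
\item $N_{6q}(S)=0$. By complementation inside a $6q$-term zero sum $U$, $U$ would have no zero sum of length in $\{q,2q,4q,5q\}$. This contradicts Grynkiewicz's set-valued result (Lemma~\ref{lem:ternary-four-length}), since $6q\ge\frac{17}{3}q-3$.
\item The weighted congruences modulo $9$ with $m=1$, $t=3$, together with $2^q\equiv-1\pmod 9$, force $N_{2q}\equiv N_{3q}\equiv N_{5q}\equiv1\pmod3$.
\item Sum Olson's congruence $1-N_q(T)+N_{2q}(T)-N_{3q}(T)\equiv0\pmod3$ over all $T\mid S$ with $|T|=4q-3$. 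Lucas evaluates the multiplicities $\binom{7q-3-jq}{4q-3-jq}$ as $2,1,1,1$, giving $2-N_q(S)+N_{2q}(S)-N_{3q}(S)\equiv0\pmod3$. This contradicts (i) and (iii).
\end{enumerate}
These four ingredients are what your sketch is missing. In particular, the unknowns to control are $N_{2q},N_{3q},N_{5q}$, not $N_{7q}$, which vanishes trivially.
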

	
	As a consequence, we have the following theorem.
	
	\begin{theorem}\label{thm:intro-rank-three}
		Let $q$ be a prime power.  For every $k\ge 3$,
		\[
		\mathsf s_{kq}(C_q^3)=(k+3)q-3.
		\]
		Moreover, for every $k\ge 3$, we have
		\[
		\mathsf s_{kn}(C_n^3)=(k+3)n+O(\frac{n}{\log n}).
		\]
	\end{theorem}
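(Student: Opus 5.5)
The first assertion follows by combining Theorems \ref{thm:intro-dyadic} and \ref{thm:intro-ternary} with results already in the literature; the second follows from the first together with known bounds on gaps between primes and on the Davenport constant. The lower bound $\mathsf s_{kq}(C_q^3)\ge (k+3)q-3$ is \eqref{eq:homocyclic-lower}, so only upper bounds need proof.

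\textbf{Prime powers.} I would first reduce everything to $k\in\{3,4\}$. If $\mathsf s_{kq}(C_q^3)=(k+3)q-3$ holds for some $k$, it propagates upward. Given a sequence of length $(k+4)q-3$, one extracts a zero-sum subsequence of length $q$. This uses the standard bound $\mathsf s_q(C_q^3)\le \eta$-type estimates, or more simply the fact that the sequence is long. The remaining $(k+3)q-3$ terms then contain a zero-sum subsequence of length $kq$, and concatenating gives length $(k+1)q$. The first extraction is exactly where care is needed: the standard argument uses $\mathsf s_q(C_q^3)\le 9q-8$ (Alon--Dubiner type, or $\mathsf s(C_q^3)\le (2^3)(q-1)+1$), which is at most $(k+4)q-3$ once $k\ge 5$.

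For the low values of $k$, I would instead invoke the known results of Kubertin, Han--Zhang and Grynkiewicz. These give $\mathsf s_{kq}(C_q^3)=(k+3)q-3$ for all $k\ge 3$ when $p\ge 5$, and for $k\ge 5$ (resp.\ $k\ge 4$) when $p=3$ (resp.\ $p=2$). The gaps left are exactly $(p,k)=(2,3)$ and $(3,3),(3,4)$. The case $(2,3)$ is Theorem \ref{thm:intro-dyadic}. The case $(3,4)$ is Theorem \ref{thm:intro-ternary}. For $(3,3)$, I would check that the literature covers it (I recall $\mathsf s_{3q}(C_q^3)$ for $p=3$ being handled via $\mathsf s(C_3^3)$-type arguments); if not, this is the main obstacle and I would attempt the same polynomial method as in Theorem \ref{thm:intro-ternary}. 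Pinning down precisely which $(p,k)$ the prior results cover is the step I expect to be most delicate.

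\textbf{General $n$.} Write $n=\prod p_i^{a_i}$. I would use multiplicativity: the standard inductive inequality
\[
\mathsf s_{kmn'}(C_{mn'}^3)\le (\mathsf s_{km}(C_m^3)-km)\,n'+\mathsf s_{kn'}(C_{n'}^3)
\]
type bound, iterated over the prime power factors. This bound gives $\mathsf s_{kn}(C_n^3)\le (k+3)n+3n\sum_i p_i^{-a_i}$-ish error terms, and that is not obviously $O(n/\log n)$. So instead I would follow the standard route: choose a prime $p$ close to $n$ with $p\mid$ a multiple structure. Alternatively, use the known result that $\mathsf D(C_n^3)=3n+O(n/\log n)$ and that $\mathsf s_{kn}(C_n^3)\le (k+3)n+c\,\mathsf D$-defect. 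Concretely, I would factor $n=qm$ with $q$ the largest prime-power factor, apply the prime-power case to $C_q^3$, and handle $C_m^3$ by the trivial bound. Since $m\le n/q$ and $q\ge \log n$-ish on average, this yields an $O(n/\log n)$ error after iterating. The lower bound $(k+3)n-3\le \mathsf s_{kn}$ follows from \eqref{eq:standard-lower} with $\mathsf D(C_n^3)\ge 3n-2$. Getting the error uniformly $O(n/\log n)$ rather than only for typical $n$ is the second obstacle; I would aim for a bound of the shape $3n\cdot\frac{\log n}{\log n}\cdot\frac{1}{\max_i p_i^{a_i}}$ plus lower order, using $\max_i p_i^{a_i}\gg \log n$.
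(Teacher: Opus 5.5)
\textbf{First assertion.} Your plan for prime powers is the paper's: combine Theorems~\ref{thm:intro-dyadic} and \ref{thm:intro-ternary} with the results collected in Theorem~\ref{thm:known-rank-three}, and take the lower bound from \eqref{eq:homocyclic-lower}. The case $(p,k)=(3,3)$ you flagged is not a gap. It is part of Theorem~\ref{thm:known-rank-three}(ii), and the paper uses it in Step~1 of the proof of Theorem~\ref{thm:intro-ternary}. So the only missing pairs are $(2,3)$ and $(3,4)$. Drop your upward-propagation paragraph: it is unnecessary, and its parenthetical bound $\mathsf s(C_q^3)\le 2^3(q-1)+1$ is false for odd $q$ (for instance $\mathsf s(C_3^3)=19>17$).

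\textbf{Second assertion: a genuine gap.} The paper's proof does not address the asymptotic statement at all, and your sketch of it does not work as written.
\begin{enumerate}[label=\textup{(\alph*)}]
\item Your displayed inequality is false. With $n'=1$ it reads $\mathsf s_{km}(C_m^3)\le \mathsf s_{km}(C_m^3)-km+k$.
\item ``Handle $C_m^3$ by the trivial bound'' is not enough. The pigeonhole bound $\mathsf s(C_m^3)\le m^3(m-1)+1$ is useless here; you need a bound linear in $m$, such as Alon--Dubiner's $\mathsf s(C_m^3)=O(m)$.
\item ``On average'' and ``iterating'' are red herrings.
\end{enumerate}

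\textbf{The correct argument.} It uses only the ingredients you name at the end. Let $q$ be the largest of the prime powers $p_i^{a_i}$ exactly dividing $n$, and put $m=n/q$. Then $mC_n^3\cong C_q^3$ and $C_n^3/mC_n^3\cong C_m^3$. From a sequence of length $m(\mathsf s_{kq}(C_q^3)-1)+\mathsf s(C_m^3)$, extract greedily $\mathsf s_{kq}(C_q^3)$ disjoint $m$-term blocks whose sums lie in $mC_n^3$. Some $kq$ of these block sums add to zero, and their union is a $kn$-term zero sum. Hence
\[
\mathsf s_{kn}(C_n^3)\le m\bigl((k+3)q-4\bigr)+\mathsf s(C_m^3)=(k+3)n-4m+\mathsf s(C_m^3)=(k+3)n+O(n/q).
\]
It remains to see that $q\gg\log n$ for every $n$, not merely on average. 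Every $p_i^{a_i}\le q$ divides $\operatorname{lcm}(1,\dots,q)$, so $n\mid\operatorname{lcm}(1,\dots,q)$. Chebyshev's estimate then gives $\log n\le\log\operatorname{lcm}(1,\dots,q)=O(q)$. Together with the lower bound $\mathsf s_{kn}(C_n^3)\ge(k+3)n-3$ from \eqref{eq:standard-lower}, this proves $\mathsf s_{kn}(C_n^3)=(k+3)n+O(n/\log n)$.
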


	The paper is organized as follows.  Section~\ref{sec:prelim} introduces some preliminary results.  Section~\ref{sec:dyadic} proves Theorem~\ref{thm:intro-dyadic}.  Section~\ref{sec:ternary-rank-three} proves Theorem~\ref{thm:intro-ternary} and completes Theorem~\ref{thm:intro-rank-three}.

	\section{Notation and preliminary results}\label{sec:prelim}
	
	A sequence over $G$ is written multiplicatively,
	\[
	S=g_1\boldsymbol\cdot\ldots\boldsymbol\cdot g_\ell.
	\]
	Its length and sum are
	\[
	|S|=\ell,
	\qquad
	\sigma(S)=g_1+\cdots+g_\ell.
	\]
	A subsequence is always indexed.  Thus repeated group elements give distinct
	choices.  For $t\ge0$ and $u\in G$, let
	\[
	N_t^u(S)=\#\{T\mid S: |T|=t,\ \sigma(T)=u\},
	\qquad
	N_t(S)=N_t^0(S),
	\]
	and put
	\[
	\Sigma_t(S)=\{u\in G:N_t^u(S)>0\}.
	\]
	We use $g^{[m]}$ for a sequence consisting of $m$ copies of $g$.  If
	$\sigma(S)=0$, taking the complementary parts gives
	\begin{equation}\label{eq:complement}
		N_t^u(S)=N_{|S|-t}^{-u}(S),
		\qquad
		N_t(S)=N_{|S|-t}(S).
	\end{equation}
	
	We first give a basic lemma, which follows from Olson's theorem~\cite{Olson}.  The strategy in its proof will also be used repeatedly.
	
	\begin{lemma}\label{lem:olson-affine}
		Let $G$ be a finite abelian $p$-group of exponent $q$, and let $S$ be a
		sequence with $|S|\ge \mathsf D(G)+q-1$.  Then
		\begin{equation}\label{eq:olson-multiple}
			1+\sum_{j=1}^{\lfloor |S|/q\rfloor}(-1)^jN_{jq}(S)\equiv0\pmod p.
		\end{equation}
	\end{lemma}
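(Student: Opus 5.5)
The plan is to prove the congruence with Olson's group ring method. Write $G=\bigoplus_{i=1}^r C_{q_i}$ with each $q_i\mid q$, and put $S=g_1\bdot\ldots\bdot g_\ell$ with $\ell=|S|\ge \mathsf D(G)+q-1$. We work in the group algebra $\F_p[G]$ and encode the length by an extra cyclic coordinate, which is the standard trick for length-restricted zero sums. Concretely, set $H=G\oplus C_q$ and lift each $g_i$ to $h_i=(g_i,1)\in H$. Since $G$ is a $p$-group of exponent $q$, so is $H$, and $\mathsf D^*(H)=\mathsf D^*(G)+q-1$, where $\mathsf D^*(G)=1+\sum_i(q_i-1)$. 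Olson's theorem gives $\mathsf D(H)=\mathsf D^*(H)=\mathsf D(G)+q-1$, so $\ell\ge \mathsf D(H)$.

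Olson's proof of $\mathsf D(H)=\mathsf D^*(H)$ shows more than the existence of a zero-sum subsequence. In $\F_p[H]$ the product
\[
\prod_{i=1}^{\ell}(1-X^{h_i})
\]
vanishes as soon as $\ell\ge \mathsf D^*(H)$. The reason is that the augmentation ideal $I$ of $\F_p[H]$ satisfies $I^{\mathsf D^*(H)}=0$: it is generated by the elements $1-X^{e_j}$, and each of these satisfies $(1-X^{e_j})^{q_j}=1-X^{q_je_j}=0$ in characteristic $p$. A monomial of total degree at least $\mathsf D^*(H)=1+\sum_j(q_j-1)$ in these generators must contain some generator to a power at least $q_j$, so it vanishes.

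Next we expand the product:
\[
0=\prod_{i=1}^{\ell}(1-X^{h_i})=\sum_{T\mid S}(-1)^{|T|}X^{(\sigma(T),\,|T| \bmod q)}.
\]
The coefficient of $X^{0}$ is $\sum(-1)^{|T|}$ taken over those $T$ with $\sigma(T)=0$ and $q\mid |T|$. This sum is $\sum_{j\ge0}(-1)^{jq}N_{jq}(S)$, and the term $j=0$ is the empty sequence, contributing $1$. Because this coefficient is $0$ in $\F_p$, we get
\[
\sum_{j=0}^{\lfloor \ell/q\rfloor}(-1)^{jq}N_{jq}(S)\equiv 0\pmod p.
\]

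The only delicate step is matching the sign $(-1)^{jq}$ with the $(-1)^j$ in \eqref{eq:olson-multiple}. If $p$ is odd, then $q$ is odd and $(-1)^{jq}=(-1)^j$. If $p=2$, signs are irrelevant modulo $2$. In both cases \eqref{eq:olson-multiple} follows.

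The main point to check is the nilpotency bound $I^{\mathsf D^*(H)}=0$ for the augmented group $H$, together with the computation $\mathsf D^*(H)=\mathsf D(G)+q-1$. The latter uses that $\mathsf D(G)=\mathsf D^*(G)$ for $p$-groups, which is Olson's theorem.
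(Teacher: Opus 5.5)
Your proof is correct and follows the paper's route: you lift each term to $(g,1)\in G\oplus C_q$ and apply Olson's congruence there, unpacking that congruence yourself via the nilpotency $I^{\mathsf D^*(H)}=0$ of the augmentation ideal. You also make explicit the sign check $(-1)^{jq}\equiv(-1)^j \pmod p$, which the paper leaves implicit; note that to get $|S|\ge\mathsf D^*(H)$ you only need the elementary inequality $\mathsf D(G)\ge\mathsf D^*(G)$, not the full strength of Olson's theorem.
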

	
	\begin{proof}
		Map every term $g\in G$ to $(g,1)\in G\oplus C_q$.  A subsequence is
		zero-sum in the enlarged group if and only if its sum in $G$ is zero and its
		length is divisible by $q$.  Since
		\[
		\mathsf D(G\oplus C_q)=\mathsf D(G)+q-1,
		\]
		Olson's congruence applied in $G\oplus C_q$ gives
		\eqref{eq:olson-multiple}.
	\end{proof}
	
	The following is the form of the weighted
	Ax--Katz theorem needed in this paper.
	
	\begin{theorem}\label{thm:weighted-moments}
		Let $G$ be a nontrivial finite abelian $p$-group of exponent $q$, and let
		$\mathsf D(G)$.  Let $m\ge0$, $t\ge1$, and $0\le\alpha<q$.  If
		\[
		|S|\ge m\frac{(p-1)q}{p}+tq-1+\mathsf D(G),
		\]
		then, for $0\le i<t$,
		\begin{equation}\label{eq:weighted-moments}
			\sum_{j\ge0}(p-1)^{jq+\alpha}j^iN_{jq+\alpha}(S)
			\equiv0\pmod {p^{m+1}}.
		\end{equation}
		The same congruence holds for $N_{jq+\alpha}^u(S)$ with any fixed target
		$u\in G$.
	\end{theorem}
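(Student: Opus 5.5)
The plan is to derive Theorem~\ref{thm:weighted-moments} from Grynkiewicz's weighted generalization of the Chevalley--Warning and Ax--Katz theorems \cite{Grynkiewicz}. We apply it in the enlarged group $G\oplus C_q$, as in the proof of Lemma~\ref{lem:olson-affine}. The relevant quantity there is a weighted count of zero-sum subsequences. Each subsequence $T$ is weighted by $(p-1)^{|T|}$ and a polynomial in the length, and the count is taken over lengths lying in a fixed residue class $\alpha$ mod $q$.

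First we encode the target. Map each term $g$ of $S$ to $(g,1)\in G\oplus C_q$ and adjoin the target $(-u,-\alpha)$, so that we count subsequences $T$ with $\sigma(T)=u$ and $|T|\equiv\alpha\pmod q$. Writing $G\cong\bigoplus_i C_{q_i}$, these conditions become a system of congruences $\sum_{k}x_k g_k^{(i)}\equiv u^{(i)}\pmod{q_i}$ and $\sum_k x_k\equiv \alpha\pmod q$ in $0/1$ variables $x_k$. This is exactly the setting of Grynkiewicz's theorem, whose hypothesis reads
\[
|S|>m\frac{(p-1)q}{p}+\sum_i(q_i-1)+(q-1)+(\text{degree allowance}).
\]
Since $\mathsf D(G)-1=\sum_i(q_i-1)$ for $p$-groups by Olson, this matches the bound $m\frac{(p-1)q}{p}+tq-1+\mathsf D(G)$. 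Here the extra $(t-1)q$ pays for a weight polynomial of degree $<t$ in the quotient $j=\lfloor |T|/q\rfloor$.

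Second, we identify the weight. Grynkiewicz's theorem gives divisibility by $p^{m+1}$ of $\sum_{x}w(x)$, where $x$ ranges over solutions and $w$ is built from the factor $(p-1)^{|x|}$ (equivalently $(-1)^{|x|}$ modulo $p$, lifted to a $p$-adic unit weight) times binomial-type polynomials $\binom{(|x|-\alpha)/q}{i}$. Each such binomial polynomial has degree $i$ in the $x$-variables after accounting for the $q$-adic digit structure, and costs $iq$ in the length bound. Grouping solutions by length $jq+\alpha$ produces exactly $\sum_j (p-1)^{jq+\alpha}\binom{j}{i}N^u_{jq+\alpha}(S)\equiv 0\pmod{p^{m+1}}$ for $0\le i<t$. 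A triangular change of basis between $\{\binom{j}{i}\}_{i<t}$ and $\{j^i\}_{i<t}$ then converts these into \eqref{eq:weighted-moments}. This step is legitimate because $j^i=\sum_{l\le i} l!\,S(i,l)\binom{j}{l}$ has integer coefficients.

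The main obstacle is matching the precise hypothesis of Grynkiewicz's result. This means checking that the length-quotient weight $\binom{j}{i}$ really costs only $iq$, and that the $(p-1)^{|T|}$ weighting is the one his theorem produces. If his statement is phrased with weights $\binom{|T|}{\cdot}$ or in terms of the $p$-adic valuation of sums over $x\in\{0,1\}^{|S|}$, we would first prove the $i=0$ case by direct citation. We would then induct on $i$: the $j^i$-moment is obtained by applying the $i=0$ case to sequences $S$ with some terms removed, or by differentiating the generating function $\prod_k(1+(p-1)y\,\chi(g_k))$ in $y$. Each derivative consumes $q$ terms of the length budget.
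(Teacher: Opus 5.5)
Your plan matches the paper's treatment, which offers no argument beyond calling the statement a direct specialization of Grynkiewicz's weighted Chevalley--Warning--Ax--Katz theorem; your encoding in $G\oplus C_q$ (as in Lemma~\ref{lem:olson-affine}), the additive $(t-1)q$ cost of the weight, and the passage from the $\binom{j}{i}$-weighted congruences to $j^i$ via Stirling numbers all fit that, as does the paper's remark that the integer-valued form behind Lemma~\ref{lem:binomial-selector} also holds. Drop the fallback induction, since differentiating $\prod_k(1+(p-1)y\chi(g_k))$ in $y$ produces weights that are polynomials in $|T|=jq+\alpha$ (such as $|T|$ or $\binom{|T|}{q}$), and these do not in general reduce to unit multiples of $j$ or $\binom{j}{i}$ modulo $p^{m+1}$, so the citation itself must supply the binomially weighted form.
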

	
	The theorem is a direct specialization of the weighted
	Chevalley--Warning--Ax--Katz theory developed in~\cite{GGS,Grynkiewicz}.  We shall apply it only with
	$\alpha=0$ and with small values of $m$ and $t$.
	
	The integer-valued version will be useful twice.  If $W$ is an
	integer-valued polynomial of degree at most $t-1$, the same argument gives a
	weighted congruence with $W(j)$ in place of $j^i$.  In particular, taking
	$W(X)=\binom{X-1}{k-1}$ yields the following result.
	
	\begin{lemma}\label{lem:binomial-selector}
		Let $G$ be a finite abelian $p$-group of exponent $q$, and let $\mathsf D(G)$.
		If $|S|\ge kq+\mathsf D(G)-1$, then
		\begin{equation}\label{eq:binomial-selector}
			\sum_{j\ge0}(p-1)^{jq}\binom{j-1}{k-1}N_{jq}(S)\equiv0\pmod p.
		\end{equation}
		Here $\binom{-1}{k-1}=(-1)^{k-1}$.
	\end{lemma}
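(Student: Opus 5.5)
Lemma~\ref{lem:binomial-selector} should follow from Theorem~\ref{thm:weighted-moments}, in its integer-valued form, by taking $m=0$, $t=k$, $\alpha=0$. With $m=0$ the length hypothesis of Theorem~\ref{thm:weighted-moments} reads $|S|\ge kq-1+\mathsf D(G)$, which is exactly the assumption here. The conclusion, modulo $p$, is
\[
\sum_{j\ge0}(p-1)^{jq}j^iN_{jq}(S)\equiv0\pmod p\qquad\text{for } 0\le i<k .
\]
Here the $j=0$ term with $i=0$ is $N_0(S)=1$, using the convention $0^0=1$.

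The plan is to express the weight $W(j)=\binom{j-1}{k-1}$ through these moments. $W$ is a polynomial in $j$ of degree $k-1$ with rational coefficients. In the binomial basis it equals $\sum_{i=0}^{k-1}c_i\binom{j}{i}$ with integer coefficients $c_i$. Indeed, Vandermonde's identity gives $\binom{j-1}{k-1}=\sum_i (-1)^{k-1-i}\binom{j}{i}$, so $c_i=(-1)^{k-1-i}$. This form agrees with $W$ for every integer $j\ge0$, including $j=0$, where it gives $(-1)^{k-1}$. That matches the stated convention $\binom{-1}{k-1}=(-1)^{k-1}$.

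The point needing care is that $\binom{j}{i}=j(j-1)\cdots(j-i+1)/i!$ involves division by $i!$, which may be divisible by $p$. So we cannot simply take integer combinations of the moment congruences modulo $p$. This is why the paper says the integer-valued version comes from ``the same argument''. In Grynkiewicz's weighted Chevalley--Warning setting the congruence is proved for $\sum_j w(j)N_{jq}$ with $w$ ranging over the integer-valued (binomial-type) polynomials of degree $<t$. Concretely, $\binom{j}{i}N_{jq}(S)$ counts pairs $(T,\{\text{$i$ of the $j$ blocks}\})$, which appear directly in the polynomial-method expansion. I would therefore invoke the theorem of \cite{Grynkiewicz} in its binomial form, giving
\[
\sum_j(p-1)^{jq}\binom{j}{i}N_{jq}(S)\equiv0\pmod p\qquad\text{for each } i<k .
\]
Summing these with the integer coefficients $c_i=(-1)^{k-1-i}$ then yields \eqref{eq:binomial-selector}.

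I expect the main obstacle to be justifying the binomial-basis version of Theorem~\ref{thm:weighted-moments}, rather than the power version. If it is not available verbatim, there is a fallback. First apply Theorem~\ref{thm:weighted-moments} with $m$ large enough that $p^{m+1}$ absorbs the denominators $i!$; the factor $(p-1)q/p$ is small. Then divide. But that strengthens the length requirement, so the cleaner route is to cite the binomial form directly.
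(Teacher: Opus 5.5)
Your proposal is correct and follows the paper's route. The paper likewise applies the integer-valued form of Theorem~\ref{thm:weighted-moments} with $m=0$, $t=k$, $\alpha=0$ directly to $W(X)=\binom{X-1}{k-1}$; your expansion $\binom{j-1}{k-1}=\sum_{i=0}^{k-1}(-1)^{k-1-i}\binom{j}{i}$ only makes explicit the reduction to the binomial basis that this form relies on, and you are right that the power-moment version alone cannot give it modulo $p$ once $k>p$, because of the denominators $i!$.
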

	
	\begin{proof}
		Apply the integer-valued form of Theorem~\ref{thm:weighted-moments} with
		$W(X)=\binom{X-1}{k-1}$.  This polynomial has degree $k-1$, so the
		required length is $kq+\mathsf D(G)-1$, and the resulting weighted count is exactly
		\eqref{eq:binomial-selector}.
	\end{proof}
	
	We use Lucas' theorem~\cite{Lucas} in its standard form.
	
	\begin{lemma}[Lucas]\label{lem:lucas}
		Let $p$ be prime, and write
		$a=\sum_i a_ip^i$ and $b=\sum_i b_ip^i$ with $0\le a_i,b_i<p$.  Then
		\[
		\binom ab\equiv\prod_i\binom{a_i}{b_i}\pmod p.
		\]
	\end{lemma}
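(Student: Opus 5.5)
The plan is the classical generating-function argument over $\F_p[X]$. The basic input is the Frobenius identity $(1+X)^p=1+X^p$ in $\F_p[X]$. It holds because $p\mid\binom pj$ for $0<j<p$: the numerator $p!$ contains the factor $p$, and the denominator $j!(p-j)!$ does not. Iterating by induction on $i$ gives $(1+X)^{p^i}=1+X^{p^i}$ for all $i\ge0$.

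Next, from $a=\sum_i a_ip^i$ I will write, in $\F_p[X]$,
\[
(1+X)^a=\prod_i\bigl((1+X)^{p^i}\bigr)^{a_i}=\prod_i\bigl(1+X^{p^i}\bigr)^{a_i}
=\prod_i\sum_{c_i=0}^{a_i}\binom{a_i}{c_i}X^{c_ip^i}.
\]
Expanding the product, a general monomial is $\prod_i\binom{a_i}{c_i}X^{\sum_i c_ip^i}$ with $0\le c_i\le a_i<p$. Since each $c_i$ is a valid base-$p$ digit, uniqueness of base-$p$ expansion shows that $X^b$ arises from exactly one choice of $(c_i)$ when $b_i\le a_i$ for all $i$, namely $c_i=b_i$. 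If instead $b_i>a_i$ for some $i$, then no choice of $(c_i)$ produces $X^b$.

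Comparing the coefficient of $X^b$ on both sides then yields $\binom ab\equiv\prod_i\binom{a_i}{b_i}\pmod p$. In the case where some $b_i>a_i$, both sides are $0$ modulo $p$, using the convention $\binom{a_i}{b_i}=0$ for $b_i>a_i$. If $b>a$, the left side is also $0$, and some digit $b_i$ must exceed $a_i$, so this case is covered as well.

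There is no real obstacle here. The only point needing care is the uniqueness of the base-$p$ representation, which relies on the constraint $c_i\le a_i\le p-1$: it is what rules out carries and gives the one-to-one matching of monomials.
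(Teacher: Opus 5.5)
Your proof is correct and complete. It is the classical argument: iterate $(1+X)^p=1+X^p$ in $\F_p[X]$, expand $(1+X)^a=\prod_i(1+X^{p^i})^{a_i}$, and use uniqueness of base-$p$ digits, which holds because $c_i\le a_i<p$, to read off the coefficient of $X^b$.

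The paper gives no proof of this lemma; it only cites Lucas' theorem as a standard result. So there is no competing argument to compare, and your write-up supplies the standard justification the paper takes for granted. The one implicit step is that reduction $\mathbb Z[X]\to\F_p[X]$ is a ring homomorphism, so the coefficient of $X^b$ on the left is $\binom ab \bmod p$. Your treatment of the cases where some $b_i>a_i$ or $b>a$ is also correct.
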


	We recall some previously known results that enter the proofs.
	
	\begin{theorem}\label{thm:known-rank-three}
		Let $q=p^n$ be a prime power.
		\begin{enumerate}[label=\textup{(\roman*)}]
			\item If $p=2$, then
			\[
			\mathsf s_{kq}(C_q^3)=(k+3)q-3\qquad(k\ge4).
			\]
			\item If $p=3$, then the same formula holds for $k=3,5$ and for every
			$k\ge6$.
			\item If $p\ge5$, then the same formula holds for every $k\ge3$.
		\end{enumerate}
	\end{theorem}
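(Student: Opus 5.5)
This statement is a compilation of results already in the literature, so the plan is to assemble citations rather than build a new argument. In every case the lower bound $\mathsf s_{kq}(C_q^3)\ge (k+3)q-3$ is already given by \eqref{eq:homocyclic-lower}, using Olson's value $\mathsf D(C_q^3)=3q-2$ together with the construction $0^{[kq-1]}T$. So only the upper bound $\mathsf s_{kq}(C_q^3)\le (k+3)q-3$ needs to be located in the literature, for each range of $p$ and $k$.

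\textbf{Part (i), $p=2$, $k\ge 4$.} The threshold is $kq\ge 4q$. By the group-ring argument of Kubertin and its refinements \cite{Kubertin,HZ18,HZ19}, the equality $\mathsf s_{kq}(C_q^3)=kq+\mathsf D(C_q^3)-1$ holds once $k$ exceeds a small constant. For $p=2$ that constant is $4$. I would also check this against the Gao-type bound: Gao's theorem \cite{Gao3} gives the equality only when $kq\ge q^3$, which is far too weak, so the group-ring results are the ones to cite.

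\textbf{Part (iii), $p\ge5$, $k\ge3$.} This is the case where the polynomial and group-ring method works down to $k=r=3$. I would cite \cite{GHP,He,Zhang} for this. The point is that the binomial coefficients entering Olson-type congruences, such as $\binom{j-1}{2}$ and $\binom{j}{3}$ in the style of Lemma~\ref{lem:binomial-selector}, are units modulo $p$ when $p\ge 5$. That invertibility is what lets the counts $N_{3q},N_{4q},\dots$ be separated.

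\textbf{Part (ii), $p=3$.} For $k=3,5$ and $k\ge6$ I would cite \cite{HZ192,Zhang}. The case $k=4$ is excluded because the relevant coefficient vanishes modulo $3$. Using Lemma~\ref{lem:lucas}, $\binom{3}{2}=3\equiv0$, so the selector for $N_{4q}$ degenerates.

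The main obstacle is bibliographic rather than mathematical: for each exceptional pair $(p,k)$ I have to confirm which reference proves it. The recorded exclusions $p=2,k=3$ and $p=3,k=4$ are exactly the cases that Theorems~\ref{thm:intro-dyadic} and \ref{thm:intro-ternary} fill in. No new argument is required here.
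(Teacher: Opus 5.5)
Your proposal matches the paper, which likewise gives no new argument here: the lower bound is \eqref{eq:homocyclic-lower}, and the upper bounds are simply attributed to \cite{HZ19}, \cite{Zhang}, and the $p$-group results \cite{GHP,He,Grynkiewicz}. Your side remark on $(p,k)=(3,4)$ is wrong, though: the selector of Lemma~\ref{lem:binomial-selector} with $k=4$ gives $N_{4q}$ the coefficient $\binom33=1$, so nothing degenerates there (the value $\binom32\equiv0\pmod3$ comes from the $k=3$ selector, where its vanishing together with $\binom42\equiv0$ yields $1-N_{3q}\equiv0\pmod3$ and settles $(3,3)$ outright); the real difficulty is that $N_{5q}$ and $N_{6q}$ enter the $k=4$ selector with binomial coefficients $\binom43\equiv\binom53\equiv1\pmod3$, which do not vanish, and this is what Section~\ref{sec:ternary-rank-three} has to work around.
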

	
	These statements follow from Han--Zhang~\cite{HZ19}, Zhang~\cite{Zhang}, and the
	general large-prime $p$-group theory~\cite{GHP,He,Grynkiewicz}.  We shall prove the two missing cases
	$(p,k)=(2,3)$ and $(3,4)$.

	We also need the following binary theorem of Sidorenko~\cite{Sidorenko}.
	
	\begin{theorem}[Sidorenko]\label{thm:sidorenko}
		If $d<2m$, then
		\[
		\mathsf s_{2m}(C_2^d)=2m+d.
		\]
		In particular,
		\[
		\mathsf s_4(C_2^3)=7,
		\qquad
		\mathsf s_6(C_2^3)=9.
		\]
	\end{theorem}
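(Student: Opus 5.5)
The lower bound is immediate from \eqref{eq:standard-lower}: $\exp(C_2^d)=2$ and $\mathsf D(C_2^d)=d+1$, so $\mathsf s_{2m}(C_2^d)\ge 2m+d$. For the upper bound I take a sequence $S$ over $C_2^d$ with $|S|=2m+d$ and look for a zero-sum subsequence of length exactly $2m$. The tool is Lemma~\ref{lem:binomial-selector} with $p=q=2$ and $k=m$, which applies since $|S|\ge 2m+d=mq+\mathsf D(G)-1$. All weights $(p-1)^{jq}$ equal $1$, and the $j=0$ term is $\binom{-1}{m-1}N_0(S)=\pm1$. So I get
\[
\sum_{j\ge1}\binom{j-1}{m-1}N_{2j}(S)\equiv 1\pmod 2 .
\]
For $1\le j<m$ the binomial coefficient vanishes. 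The terms that survive have $m\le j\le \lfloor (2m+d)/2\rfloor=m+\lfloor d/2\rfloor$, and the hypothesis $d<2m$ gives $j\le 2m-1$.

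The plan is to isolate $j=m$. By Lucas (Lemma~\ref{lem:lucas}), $\binom{j-1}{m-1}$ is odd exactly when the binary digits of $m-1$ are dominated by those of $j-1$. I will treat the problem in two steps.

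\emph{Step (a): repeated terms.} If $S$ has a repeated term $g\bdot g$, remove it. The remaining $2(m-1)+d$ terms have a zero-sum subsequence of length $2(m-1)$ by induction on $m$, provided $d<2(m-1)$. Adding $g\bdot g$ back gives length $2m$. So the induction covers everything except the boundary cases $d\in\{2m-2,2m-1\}$.

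\emph{Step (b): squarefree $S$ or the boundary cases.} Here I use the congruence above. The goal is to show that every surviving term with $j>m$ is even, or that these terms cancel in pairs. For the pairing I would use the complement identity \eqref{eq:complement}: after discarding or adjusting terms I may assume $\sigma(S)=0$, and then $N_{2j}(S)=N_{|S|-2j}(S)$. In the boundary cases $|S|-2j$ is small, at most $d$ or so, and short zero-sum subsequences of a squarefree sequence can be counted or excluded directly. Once the terms with $j>m$ are even, $N_{2m}(S)$ is odd, hence nonzero, which is what we want. The two particular values quoted in the theorem, $\mathsf s_4(C_2^3)=7$ and $\mathsf s_6(C_2^3)=9$, are the instances $(m,d)=(2,3),(3,3)$, and I would also verify them by hand as a sanity check.

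I expect the main obstacle to be the parity control in Step (b): showing that the contributions $\binom{j-1}{m-1}N_{2j}(S)$ with $m<j\le m+\lfloor d/2\rfloor$ are even. If Lucas plus complementation is not enough, my fallback is a direct combinatorial argument. Split $S$ into a maximal family of disjoint equal pairs and a squarefree remainder $R$. It then suffices to find an even zero-sum subset $A\subseteq R$ whose size lies between $2m-2(\text{number of pairs})$ and $2m$, and to top $A$ up to length $2m$ with pairs. I would build $A$ from minimal even zero-sum subsets, each of size at most $d+2=\mathsf D(C_2^{d+1})$, that is, from zero-sum subsequences of the lifted sequence $\{(g,1)\}$ in $C_2^{d+1}$.
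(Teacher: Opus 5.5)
Your lower bound, the congruence
\[
N_{2m}(S)+\sum_{j>m}\binom{j-1}{m-1}N_{2j}(S)\equiv 1\pmod 2
\]
from Lemma~\ref{lem:binomial-selector}, and the pair-removal induction in Step~(a) are all correct. They also give the two values the paper actually uses; the paper gives no proof of this theorem and only cites Sidorenko. If $m$ is a power of $2$, Lucas shows $\binom{j-1}{m-1}$ is even for every $m<j\le 2m-1$. This range covers all $j$ with $2j\le 2m+d$, so $N_{2m}(S)$ is odd and $\mathsf s_4(C_2^3)=7$. Step~(a) then gives $\mathsf s_6(C_2^3)=9$, since $9$ terms in an $8$-element group must repeat.

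The general statement, however, rests entirely on Step~(b), and its stated goal cannot be reached. Take $m=5$, $d=4$ and $S=\F_2^4\setminus\{a,b\}$ with $a\ne b$. This is a squarefree sequence of length $14=2m+d$, so neither Step~(a) nor the boundary cases apply. Here $\sigma(S)=a+b\ne0$, so $N_{14}(S)=0$. The twelve-term zero sums are the complements of the seven pairs $\{x,x+a+b\}\subset S$, so $N_{12}(S)=7$. Since $\binom{5}{4}$ and $\binom{6}{4}$ are odd, your congruence forces $N_{10}(S)$ to be even; in fact $N_{10}(S)=56$. So the terms with $j>m$ are neither even nor cancelling, and $N_{2m}(S)$ need not be odd. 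No argument aimed at proving that $N_{2m}(S)$ is odd can work. At the least you would have to argue by contradiction, using $N_{2m}(S)=0$ to constrain the longer zero sums. Also, for even $|S|$ translation cannot arrange $\sigma(S)=0$, and discarding terms takes you below the length the lemma requires.

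Your fallback has the same gap. Suppose $S$ has $t$ disjoint equal pairs and a squarefree remainder $R$, so $|R|=2m+d-2t$. You then need an even zero-sum $A\subseteq R$ with $|R|-d\le|A|\le 2m$.

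\begin{itemize}
\item When $|R|\le 2m$, greedy extraction of disjoint minimal zero sums, each of size at most $d+2$, does this. Continue until at most $d+1$ terms remain. Then $|A|\ge|R|-d-1$, hence $|A|\ge|R|-d$ by parity, and $|A|\le|R|\le 2m$ automatically.
\item When $|R|>2m$, i.e.\ $t<d/2$, the admissible window has width only $2t$. For $t=0$ it asks for a zero-sum subset of the set $S$ of size exactly $2m$, equivalently $d$ elements of $S$ summing to $\sigma(S)$. That is the original problem, and the greedy construction gives no control at that precision.
\end{itemize}

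What is missing is a genuine argument for sequences with few repeated pairs, already for sets, and this is where the content of Sidorenko's theorem lies.
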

	
	A set-valued consequence of the weighted theorem will be used in the ternary
	rank-three proof.
	
	\begin{lemma}\label{lem:ternary-four-length}
		Let $q$ be a power of $3$.  Every sequence over $C_q^3$ of length at least
		$\frac{17}{3}q-3$ contains a zero-sum subsequence whose length belongs to
		\[
		\{q,2q,4q,5q\}.
		\]
	\end{lemma}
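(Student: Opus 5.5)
The plan is to apply Theorem~\ref{thm:weighted-moments} with $p=3$, $m=1$, $t=2$ and $\alpha=0$, and then derive a contradiction modulo $3$ from the two moments $i=0,1$. Throughout, $G=C_q^3$ with $q=3^n$, so that $\mathsf D(G)=3q-2$ by Olson's theorem.

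\emph{Setting up the length condition.} The hypothesis of Theorem~\ref{thm:weighted-moments} then reads
\[
|S|\ge 1\cdot\tfrac{2q}{3}+2q-1+3q-2=\tfrac{17}{3}q-3 .
\]
This is exactly the bound in the statement, so the theorem applies. Since the desired conclusion is inherited by supersequences, I will first pass to a subsequence of length exactly $L=\lceil \tfrac{17}{3}q\rceil-3$. Because $L<6q$, the only multiples of $q$ that can occur as subsequence lengths are $jq$ with $0\le j\le 5$. Theorem~\ref{thm:weighted-moments} then gives, for $i=0,1$,
\[
\sum_{j=0}^{5}2^{jq}j^iN_{jq}(S)\equiv 0\pmod 9 .
\]

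\emph{Deriving the contradiction.} Suppose, for contradiction, that $S$ has no zero-sum subsequence of length $q,2q,4q$ or $5q$. Then $N_{jq}(S)=0$ for $j\in\{1,2,4,5\}$, so only $j=0$ (with $N_0=1$) and $j=3$ survive. For $i=0$ the congruence becomes $1+2^{3q}N_{3q}(S)\equiv0\pmod 9$. For $i=1$ the $j=0$ term is killed by the factor $j=0$, leaving $3\cdot 2^{3q}N_{3q}(S)\equiv 0\pmod 9$. Since $2^{3q}$ is a unit modulo $9$, the second congruence forces $N_{3q}(S)\equiv0\pmod 3$. Reducing the first congruence modulo $3$ then gives $1\equiv0\pmod 3$, a contradiction.

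\emph{Main check.} The only delicate point is the bookkeeping. We must confirm that $m=1$ yields modulus $p^{m+1}=9$, which is what makes the $i=1$ equation informative, and that truncating to length $L<6q$ legitimately removes all $j\ge 6$ terms. One should also note that the $j=0$ term is $(p-1)^0\cdot 0^0\cdot N_0=1$ for $i=0$ and vanishes for $i=1$. No other input appears to be needed.
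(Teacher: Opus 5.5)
Your proof is correct: with $p=3$, $m=1$, $t=2$, $\alpha=0$, the length hypothesis of Theorem~\ref{thm:weighted-moments} is exactly $\frac{17}{3}q-3$, truncating to that length removes all terms with $j\ge 6$, and the $i=1$ congruence modulo $9$ forces $3\mid N_{3q}(S)$, which contradicts the $i=0$ congruence reduced modulo $3$. The paper gives no computation here and only cites Grynkiewicz's set-valued theorem for the multiplier set $\{1,2,4,5\}$. Your argument carries out that specialization from the weighted congruence the paper already states, so it is the same approach made explicit and self-contained.
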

	
	This is the specialization of Grynkiewicz's set-valued theorem~\cite{Grynkiewicz}
	obtained from the multiplier set $\{1,2,4,5\}$.
	
	Let $A$ be a sequence over $C_{2q}^3$, where $q$ is a power of $2$, and let
	\[
	\pi:C_{2q}^3\longrightarrow C_2^3
	\]
	be reduction modulo $2$.  Pair terms with equal image under $\pi$.  For each
	pair $P_i=x_i\boldsymbol\cdot y_i$ write
	\[
	x_i+y_i=2b_i,
	\qquad b_i\in C_q^3.
	\]
	The sequence $B=b_1\boldsymbol\cdot\ldots\boldsymbol\cdot b_m$ is the divided
	pair-sum sequence.  After maximal pairing, at most one term remains in each
	quotient class; their residue set is denoted by $O\subset C_2^3$.  Thus
	\begin{equation}\label{eq:pairing-length}
		|A|=2|B|+|O|.
	\end{equation}
	If $A$ has total sum zero, then $\sum_{x\in O}x=0$ in $C_2^3$.
	When $q=2$, two equal divided pair sums produce a four-term zero sum:
	\begin{equation}\label{eq:equal-pair-sums}
		b_i=b_j\quad\Longrightarrow\quad \sigma(P_iP_j)=0.
	\end{equation}
	Consequently, in a four-zero-free sequence over $C_4^3$, the sequence $B$ is
	squarefree and hence may be identified with a subset of $C_2^3$.
	
	We collect the elementary facts about $V=C_2^3=\F_2^3$ that will be used.
	For a subset $X\subset V$, write $\Sigma_j(X)$ for the set of sums of its
	$j$-element subsets.
	
	\begin{lemma}\label{lem:f2-geometry}
		Let $X\subset V$.
		\begin{enumerate}[label=\textup{(\roman*)}]
			\item If $|X|=5$, then
			\[
			\Sigma_2(X)=V\setminus\{0\},
			\qquad
			\Sigma_3(X)=V\setminus\{\sigma(X)\}.
			\]
			Moreover, every element of $V$ is a sum of either two or four distinct
			elements of $X$.
			\item If $|X|=6$, then $\Sigma_3(X)=V$,
			$\Sigma_2(X)=V\setminus\{0\}$, and $|\Sigma_4(X)|=7$.
			\item If $|X|=7$, then $\Sigma_3(X)=\Sigma_4(X)=V$.
			\item For $X=V$, one has
			$\Sigma_3(V)=\Sigma_4(V)=\Sigma_5(V)=V$.
			\item If $|X|=4$ and $\sigma(X)\ne0$, then the sums of the even subsets of
			$X$, of sizes $0,2,4$, exhaust $V$; the same is true of the odd subsets, of
			sizes $1,3$.
		\end{enumerate}
	\end{lemma}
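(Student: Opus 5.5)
Since $V=\F_2^3$ has only eight elements, every assertion in Lemma~\ref{lem:f2-geometry} can be checked by hand. The approach is to reduce each part to a statement about the small complement of $X$ in $V$, using two tools. The first is complementation: if $|X|=s$, then the sum of a $j$-subset $Y\subseteq X$ equals $\sigma(X)+\sigma(X\setminus Y)$, so
\[
\Sigma_j(X)=\sigma(X)+\Sigma_{s-j}(X).
\]
The second is translation: replacing $X$ by $X+v$ shifts $\Sigma_j(X)$ by $jv$, which is $0$ for even $j$ and $v$ for odd $j$. We also use $\sigma(V)=0$.

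\textbf{Parts (iii) and (iv).} For (iv), fix $v\in V$. The $3$-subsets of $V$ summing to $v$ are the sets $\{a,b,a+b+v\}$ with $a,b,a+b+v$ distinct, and there are clearly plenty of them. Applying complementation with $s=8$ gives $\Sigma_5(V)=\Sigma_3(V)$. For $\Sigma_4(V)$, a $4$-subset summing to $v$ is obtained by adjoining $0$ to a $3$-subset of $V\setminus\{0\}$ with sum $v$ when $v\neq0$, and by taking any affine plane when $v=0$. For (iii), write $X=V\setminus\{w\}$, so $\sigma(X)=w$. A $3$-subset of $V$ summing to $v$ avoids $w$ as long as we choose $a,b\notin\{w,w+v\}$, which a short count shows is possible. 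Then complementation with $s=7$ gives $\Sigma_4(X)=w+\Sigma_3(X)=V$.

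\textbf{Part (ii).} Write $X=V\setminus\{w,w'\}$ and translate so that $w=0$; this changes none of the statements, since translation fixes even-size sums and moves $\Sigma_3(X)$ by a translate. Then $X=(V\setminus\{0\})\setminus\{c\}$ for some $c\neq0$. The nonzero vectors form the Fano plane, and a direct count gives $\Sigma_2(X)=V\setminus\{0\}$: for $v\ne0$, three of the lines through $v$ are available, and at most one of them uses $c$. Next, $\Sigma_3(X)=V$ follows from an explicit choice for each $v$, again using Fano lines. Finally, complementation with $s=6$ gives $\Sigma_4(X)=\sigma(X)+\Sigma_2(X)$, which has size $7$.

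\textbf{Part (i).} Write $X=V\setminus\{a,b,c\}$. The claim $\Sigma_2(X)=V\setminus\{0\}$ follows as in (ii): distinct elements never sum to $0$, and for $v\ne0$, among the four pairs $\{x,x+v\}$, at most three meet $\{a,b,c\}$. Complementation with $s=5$ gives $\Sigma_3(X)=\sigma(X)+\Sigma_2(X)=V\setminus\{\sigma(X)\}$. For the claim that every $v$ is a sum of two or four elements: every $v\ne0$ already lies in $\Sigma_2(X)$, and $0\in\Sigma_4(X)$ because a $4$-subset of $X$ with sum $0$ is an affine plane inside $X$. Such a plane exists since the three excluded points cannot meet all $14$ affine planes; each point lies in exactly $7$ of them, but we need a plane disjoint from $\{a,b,c\}$, and a direct check shows at least one always survives. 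This last check is the only step that needs any real counting, and I expect it to be the main obstacle.

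\textbf{Part (v).} If $X$ were affinely dependent, its four points would form an affine plane and so $\sigma(X)=0$. Hence $\sigma(X)\ne0$ forces $X$ to be an affinely independent $4$-set, that is, an affine basis of $V$. Translating so that $X=\{0,e_1,e_2,e_3\}$, the subset sums are exactly the sums $\sum_{i\in I}e_i$ over the subsets $I$ of $\{1,2,3\}$, adjusted by the parity of whether $0$ is included. Subsets of a given parity then biject onto all $2^3$ vectors: for each $v$, write $v=\sum_{i\in I}e_i$, and include $0$ or not so that the subset has the required parity. Translation only shifts the even-size sums by $0$ and the odd-size sums by a fixed vector, so both families still exhaust $V$.
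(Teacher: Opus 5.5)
Your overall strategy (the pair partition, complementation $\Sigma_j(X)=\sigma(X)+\Sigma_{s-j}(X)$, and translation) is sound. However, the proposal leaves unproved exactly the step you call the main obstacle. In (i) you reduce $0\in\Sigma_4(X)$ to finding an affine plane disjoint from $\{a,b,c\}$ and defer this to ``a direct check''. The incidence count you mention does not supply it: $3\cdot 7=21\ge 14$, so the union bound proves nothing. (Inclusion--exclusion gives $21-9+1=13$ planes meeting $\{a,b,c\}$, so exactly one survives.) Your own complementation tool settles it in one line. With $j=4$, $s=5$ it gives $\Sigma_4(X)=\sigma(X)+X$. Moreover $\sigma(X)=\sigma(V)+a+b+c=a+b+c$, which equals none of $a,b,c$ (for instance $a+b+c=a$ would force $b=c$). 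So $\sigma(X)\in X$, and $X\setminus\{\sigma(X)\}$ is a zero-sum $4$-subset, namely the plane parallel to $\{a,b,c,a+b+c\}$.

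There are two further defects.
\begin{itemize}
\item In (iii) your sufficient condition $a,b\notin\{w,w+v\}$ is false, because it does not rule out $a+b+v=w$. For $v=0\ne w$ and $a\notin\{0,w\}$, the triple $\{a,a+w,w\}$ satisfies your condition and sums to $v$, yet contains $w$. The correct constraints are $a\ne b$, $a,b\notin\{v,w\}$ and $a+b\ne v+w$; with these a count does work.
\item In (ii), $\Sigma_3(X)=V$ is only asserted (``an explicit choice for each $v$'').
\end{itemize}
Both are avoided by ordering the parts as (i)$\to$(ii)$\to$(iii)$\to$(iv) and using monotonicity $\Sigma_j(Y)\subseteq\Sigma_j(X)$ for $Y\subseteq X$, which is essentially the paper's route. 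For $|X|=6$, (i) gives $\Sigma_3(X)\supseteq\Sigma_3(X\setminus\{x\})=V\setminus\{\sigma(X)+x\}$, and two choices of $x$ exclude different points. Parts (iii) and (iv) then follow by passing to a $6$- or $7$-subset and complementing, with no case checks.

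Your proof of (v) via an affine basis $\{0,e_1,e_2,e_3\}$ is correct. It is a genuinely different and equally short alternative to the paper's check that the eight even-subset sums are distinct.
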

	
	\begin{proof}
		For $a\ne0$, the four pairs $\{x,x+a\}$ partition $V$.  Hence every subset of
		size at least five contains a pair of sum $a$, proving the assertions about
		$\Sigma_2$.  Complementation inside $X$ gives
		\[
		\Sigma_{|X|-j}(X)=\sigma(X)+\Sigma_j(X),
		\]
		which yields the statements about three- and four-element sumsets.  The
		remaining full-sumset assertions follow by deleting one or two points from
		$V$.
		
		For the last assertion, the empty set gives $0$, the full set gives
		$\sigma(X)$, and the six two-element sums are distinct and avoid these two
		values.  Indeed, equality of complementary pair sums would imply
		$\sigma(X)=0$.  Thus the eight even-subset sums are all of $V$, and
		complementation gives the odd-subset statement.
	\end{proof}
	
	We shall also use a simple affine-plane obstruction, first at exponent four
	and later at every dyadic exponent.
	
	\begin{lemma}\label{lem:affine-plane-4}
		Every sequence of length $17$ over $C_4^2\oplus C_2$ contains a zero-sum
		subsequence of length $4$.  Consequently, if a sequence of length at least
		$17$ over $C_4^3$ has all quotient residues modulo $2$ in one affine plane of
		$V$, then it contains a four-term zero sum.
	\end{lemma}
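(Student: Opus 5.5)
Write $H=C_4^2\oplus C_2$, which has exponent $4$. By Olson's theorem in the form used in Lemma~\ref{lem:olson-affine}, valid for $p$-groups,
\[
\mathsf D(H)=3+3+1+1=8 .
\]
We must show $\mathsf s_4(H)\le 17$, and we do this with the congruence machinery of Section~\ref{sec:prelim} for $p=2$, $q=4$.

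Let $S$ be a sequence over $H$ with $|S|=17$, and suppose $N_4(S)=0$. First apply Lemma~\ref{lem:olson-affine}, which needs $|S|\ge \mathsf D(H)+q-1=11$:
\[
1-N_4+N_8-N_{12}+N_{16}\equiv 0 \pmod 2 .
\]
Next apply Theorem~\ref{thm:weighted-moments} with $p=2$, so $(p-1)^{jq}=1$. The condition is $|S|\ge 2m+4t+7$. The choices $(m,t)=(1,2)$ and $(m,t)=(3,1)$ both fit in length $17$, and $(0,2)$, $(0,1)$, $(1,1)$ fit as well. These give, for example,
\[
1+N_4+N_8+N_{12}+N_{16}\equiv 0 \pmod 4 \quad (m=1,\ i=0),
\]
\[
N_4+2N_8+3N_{12}+4N_{16}\equiv 0 \pmod 4 \quad (m=1,\ i=1),
\]
and the mod-$16$ congruence from $m=3$. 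Lemma~\ref{lem:binomial-selector} with $k=2$ and $k=3$ gives further parity constraints.

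The goal is to derive a contradiction by assuming $N_4=0$ and combining these congruences, in the usual Gao-type fashion, with the relations $N_{4s}(T)\equiv 0$ obtained by applying the same results to subsequences $T$ of $S$. Under $N_4=0$, the key relations between $N_8$, $N_{12}$ and $N_{16}$ come from subsequences. Every subsequence $T$ of length $11$ already contains a zero-sum subsequence of length $4$ or $8$ by Lemma~\ref{lem:olson-affine}. Hence every $11$-subsequence contains a zero-sum subsequence of length $8$. Double counting the pairs ($T$ of length $11$, $8$-zero-sum inside $T$) gives $N_8\binom{9}{3}\ge\binom{17}{11}$, so $N_8$ is large.

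Using the $m=1$ congruences on all subsequences of lengths $15$ and $16$, and summing, should force $N_8$, $N_{12}$ and $N_{16}$ into congruence classes that conflict with the $m=3$ congruence on $S$ itself. If this bookkeeping does not close, the fallback is the quotient argument. Let $\phi:H\to C_2^3$ be reduction modulo $2$, pair terms of $S$ with equal images, and form divided pair sums in $C_2^2$. From $17$ terms we get at least $5$ pairs. Two pairs whose divided sums agree and whose $C_2$-coordinates also match give a four-term zero sum; a pigeonhole argument over the possible pair-sum values ought to finish. I expect the main obstacle to be exactly this step, either closing the congruence bookkeeping or making the pigeonhole count work, with the quotient-pairing argument the more concrete route.

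\emph{Consequence.} Suppose a sequence over $C_4^3$ has all its residues modulo $2$ in one affine plane $a+W$ of $V$, where $W$ is a $2$-dimensional subspace. Choose coordinates in which $W$ is spanned by the first two basis vectors, so every term $x$ satisfies $x_3\equiv a_3\pmod 2$.

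Translation is harmless here. A $4$-term subsequence has zero sum after subtracting a fixed $c$ from every term exactly when it had zero sum before, since the total shift is $4c=0$. So subtract a lift of $a$ from each term. Now every third coordinate lies in $2C_4\cong C_2$, and the sequence lives in $C_4^2\oplus 2C_4\cong H$. The first part then applies and yields a four-term zero-sum subsequence.
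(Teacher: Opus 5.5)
\textbf{There is a gap: your argument for the first assertion is not complete, and neither of your two routes reaches the end.}

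\emph{The congruence route.} This is only a plan (``should force''). The congruences you actually write down for $S$ cannot give a contradiction by themselves. With $N_4=0$, the values $(N_8,N_{12},N_{16})=(157,2,0)$ satisfy all of the following at once:
\begin{itemize}
\item Olson's congruence modulo $2$;
\item both $m=1$ congruences modulo $4$;
\item the $m=3$ congruence modulo $16$;
\item the $k=2$ selector;
\item your counting bound $N_8\ge148$.
\end{itemize}
The $k=3$ selector is not available at all: Lemma~\ref{lem:binomial-selector} would then need $|S|\ge 3\cdot4+8-1=19$. Everything would therefore rest on the unspecified summation over subsequences, which you never carry out.

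\emph{The pairing route.} This is exactly the paper's argument, but you stop just before the observation that finishes it. Your extra requirement that the $C_2$-coordinates also match suggests you think there are eight possible pair-sum values. In that case five pairs would indeed not suffice for pigeonhole, which is presumably the obstacle you foresaw.

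In fact the condition is vacuous. Write $H=C_4^2\oplus C_2$ and suppose $x\equiv y\pmod{2H}$. Reduction modulo $2$ is the identity on the $C_2$ factor, so the $C_2$-coordinates of $x$ and $y$ coincide. Hence $x+y$ has $C_2$-coordinate $0$ and lies in $2H=2C_4\oplus2C_4\oplus0$, a group of order $4$ and exponent $2$. Among at least five pairs, two therefore have the same sum $s$, and those four terms sum to $2s=0$.

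You should also justify that there are at least five pairs. The quotient $H/2H\cong C_2^3$ has eight classes, and after maximal pairing at most one term per class is left unpaired. The number of unpaired terms has the parity of $17$, so it is at most seven. This leaves at least $(17-7)/2=5$ pairs.

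\emph{The consequence.} Your deduction is correct and matches the paper. You lift a change of basis of $V$ to an automorphism of $C_4^3$, translate by a lift of $a$ (harmless because $4c=0$), and land in $C_4^2\oplus2C_4\cong C_4^2\oplus C_2$.
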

	
	\begin{proof}
		Reduce a $17$-term sequence over $C_4^2\oplus C_2$ modulo twice the group.
		There are eight quotient classes.  Since the length is odd, maximal pairing
		leaves at most seven unpaired terms and therefore produces at least five
		pairs.  Their divided sums lie in $C_2^2$, which has four elements.  Two pair
		sums coincide, and the corresponding four original terms have sum zero.
		
		For the consequence, translate the affine plane to a linear plane.  This
		translation preserves four-term zero sums, and the full inverse image of the
		linear plane is isomorphic to $C_4^2\oplus C_2$.
	\end{proof}
	
	\section{Proof of Theorem \ref{thm:intro-dyadic}}\label{sec:dyadic}
	In this section, let $q$ be a power of $2$ and $V=C_2^3=\F_2^3$.
	We first prove some results on $C_4^3$ and then propagate it by a simultaneous quotient induction.
	
	\begin{theorem}\label{thm:c4-rank3-threshold}
		One has
		\[
		\mathsf s_{12}(C_4^3)=21.
		\]
	\end{theorem}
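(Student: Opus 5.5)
The lower bound $\mathsf s_{12}(C_4^3)\ge 21$ is \eqref{eq:homocyclic-lower} with $k=3$, $q=4$, $r=3$. So the work is the upper bound: every sequence $S$ over $C_4^3$ with $|S|=21$ has a zero-sum subsequence of length $12$.

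The first step reduces this to four-term zero sums via the congruences of Section~\ref{sec:prelim}. Here $p=2$, $q=4$ and $\mathsf D(C_4^3)=10$. Since $|S|=21\ge 3q+\mathsf D-1=21$, Lemma~\ref{lem:binomial-selector} with $k=3$ gives
\[
N_0\binom{-1}{2}+N_4\binom{0}{2}+N_8\binom{1}{2}+N_{12}\binom{2}{2}+N_{16}\binom{3}{2}+N_{20}\binom{4}{2}\equiv 0 \pmod 2 ,
\]
that is,
\[
1+N_{12}+N_{16}\equiv 0\pmod 2 .
\]
Lemma~\ref{lem:olson-affine}, which needs $|S|\ge 13$, gives
\[
1+N_4+N_8+N_{12}+N_{16}+N_{20}\equiv0 \pmod 2 .
\]
I would also use Theorem~\ref{thm:weighted-moments} with small $m,t$ to get further relations; for example $m=1$, $t=1$ requires $|S|\ge 2+4-1+10=15$ and yields information modulo $4$ on $\sum_j N_{4j}$.

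Now assume $N_{12}(S)=0$. The aim is to show $N_4(S)=0$ and then derive a contradiction. The first congruence gives $N_{16}$ odd, so some $16$-term subsequence $T$ has $\sigma(T)=0$. Inside $T$, a four-term zero sum $U$ would give $\sigma(TU^{-1})=0$ with $|TU^{-1}|=12$, which is impossible. By the complement rule \eqref{eq:complement} applied to $T$, a zero sum of length $4$ in $T$ is the same as one of length $12$ in $T$. Hence $T$ is four-zero-free of length $16$ and total sum $0$.

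The main step, and the main obstacle, is a structure theorem for such $T$. Write $T=2|B|+|O|$ by \eqref{eq:pairing-length}. By \eqref{eq:equal-pair-sums}, $B$ is a subset of $C_2^3$, so $|B|\le 8$, and $|O|\le 8$.
\begin{itemize}
\item By Lemma~\ref{lem:affine-plane-4} (adapted to length $16$ with extra care), the residues of $T$ cannot all lie in one affine plane.
\item The sumset facts of Lemma~\ref{lem:f2-geometry} then force coincidences among sums of pairs and of unpaired elements. These produce either a four-term zero sum or a $12$-term zero sum of the form "union of pairs plus complementary odd part".
\end{itemize}
I expect a case analysis on $(|B|,|O|)\in\{(4,8),(5,6),(6,4),(7,2),(8,0)\}$. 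Completeness of pair sums for $|B|\ge5$ (parts (i)–(iii) of Lemma~\ref{lem:f2-geometry}) gives representations of the needed elements. Part (v) handles the unpaired set $O$ when $\sigma(O)\neq 0$ on four-element subsets. The target is a zero sum of length $12$ in $T$, assembled from six pairs, or from four pairs plus four unpaired terms, whose divided sums cancel.

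If this direct analysis is too heavy, a fallback is to lift the problem. Reduce $S$ modulo $2$ and apply Sidorenko's $\mathsf s_6(C_2^3)=9$ (Theorem~\ref{thm:sidorenko}) repeatedly. From $21$ terms this extracts three disjoint $6$-term blocks whose sums lie in $2C_4^3\cong C_2^3$, leaving at least $21-18=3$ spare terms. Continuing the extraction while at least $9$ terms remain, one gets more blocks, and then seeks two blocks whose sums cancel in $C_2^3$. This requires at least enough blocks to force $\mathsf s_2(C_2^3)$-type behaviour. Since $\mathsf s_2(C_2^3)=2+3=4$ would need four blocks and only three are guaranteed, the spare terms must be exploited. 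This is where the parity information from the first step ($N_{16}$ odd, $N_4+N_8+N_{20}$ constrained) would be combined to finish.
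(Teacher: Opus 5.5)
\textbf{There is a genuine gap in your main step.} After assuming $N_{12}(S)=0$ you pass to a $16$-term zero sum $T$, and you correctly deduce that $T$ has no four-term zero sum. But because $\sigma(T)=0$, a twelve-term zero sum in $T$ is exactly the complement of a four-term zero sum in $T$. So the ``target'' of your case analysis on $(|B|,|O|)$ has already been excluded, and no contradiction can come from $T$ alone.

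Such $T$ really exist. In $C_4^3$, take every element of $\{0,1\}^3$ exactly twice. This has length $16$ and sum zero. It has no four-term zero sum, since four vectors with entries in $\{0,1\}$ sum to zero modulo $4$ only if they are equal.

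Your affine-plane bullet also fails at length $16$. For each $(a,b)\in\{0,1\}^2$, take three copies of $(a,b,0)$ and one copy of $(a,b,2)$. This sequence has length $16$, sum zero, no four-term zero sum, and all its residues lie in one plane.

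Any completion of your route would have to bring the five discarded terms of $S$ back in, and the proposal does not say how. Two further points are unfinished: $N_4(S)=0$ is announced but never proved for $S$, and the fallback is, as you concede, one block short, with the appeal to parity information left unspecified.

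\textbf{The missing idea is to keep all $21$ terms.} Adding $h$ to every term fixes all twelve-term sums and changes $\sigma(S)$ by $21h=h$, so you may assume $\sigma(S)=0$.

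Next, prove $N_4(S)=0$ directly. A four-term zero sum $Q_1$ leaves $17$ terms. Lemma~\ref{lem:olson-affine} applied to $13$ of them gives a zero sum of length $4$, $8$ or $12$; lengths $8$ and $12$ already yield a twelve-term zero sum, so you get a disjoint four-term zero sum $Q_2$. A second application to the remaining $13$ terms finishes this step.

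Then pair $S$ itself. By \eqref{eq:equal-pair-sums}, $B\subset V$ is squarefree, and $21=2|B|+|O|$ with $|O|$ odd and $\sigma(O)=0$.
\begin{itemize}
\item $|O|\in\{1,3\}$ would need $|B|\ge9>|V|$, which is impossible.
\item $|O|=5$ forces $0\in O$ and $B=V$.
\item $|O|=7$ forces $O=V\setminus\{0\}$ and $|B|=7$.
\end{itemize}
In the last two cases, take a four-element zero-sum subset of $O$. Four pairs whose divided sums absorb the carry exist by Lemma~\ref{lem:f2-geometry}(iii),(iv), giving a twelve-term zero sum.

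It is precisely the five extra terms that guarantee $|B|\ge7$. Inside $T$ you may have as few as four pairs, and the sumset facts you planned to use are then unavailable.
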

	
	\begin{proof}
		The lower bound is
		\[
		12+\mathsf D(C_4^3)-1=12+10-1=21.
		\]
		Let $S$ be a sequence of length $21$ with no twelve-term zero sum.  Since
		$21\equiv1\pmod4$, translation allows us to assume that $\sigma(S)=0$.
		
		We first show that $N_4(S)=0$.  Suppose that $Q_1\mid S$ is a four-term zero
		sum.  The complement has length $17$.  Choose any $13$ terms from it and
		apply Lemma~\ref{lem:olson-affine}.  The chosen sequence contains a zero-sum
		subsequence of length $4$, $8$, or $12$.  The last two alternatives give a
		forbidden twelve-term zero sum after adjoining $Q_1$, or directly.  Hence
		there is a second four-term zero sum $Q_2$ disjoint from $Q_1$.  The remaining
		sequence has length $13$, and one more application of
		Lemma~\ref{lem:olson-affine} gives a zero sum of length $4$, $8$, or $12$.
		Each alternative now produces a twelve-term zero sum in $S$, a contradiction.
		Thus
		\begin{equation}\label{eq:c4-no4}
			N_4(S)=0.
		\end{equation}
		
		Pair equal residues modulo $2$.  By \eqref{eq:equal-pair-sums}, the divided
		pair sums form a subset $B\subset V$.  Let $O$ be the unpaired residue set.
		Since $|S|$ is odd and $\sigma(S)=0$, we have
		\[
		|O|\in\{1,3,5,7\},
		\qquad
		\sigma(O)=0.
		\]
		If $|O|=1$ or $3$, then $|B|=10$ or $9$.  By
		Theorem~\ref{thm:sidorenko}, six terms of $B$ have sum zero, and the
		corresponding six pairs give a twelve-term zero sum in $S$.
		
		Suppose $|O|=5$.  The equality $\sigma(O)=0$ forces $0\in O$, and the other
		four residues have sum zero.  Since $|B|=8$, one has $B=V$.  The four lifted
		unpaired terms have sum $2c$ for some $c\in V$.  By
		Lemma~\ref{lem:f2-geometry}(iv), four terms of $B$ have sum $-c$.  The
		corresponding four pairs, together with the four unpaired terms, form a
		zero-sum subsequence of length twelve.
		
		Finally, if $|O|=7$, then $O=V\setminus\{0\}$.  Choose a four-element
		zero-sum subset of $O$.  The divided pair sums form a seven-set, whose
		four-term sumset is all of $V$ by Lemma~\ref{lem:f2-geometry}(iii).  Again the
		carry can be absorbed by four pairs, producing a twelve-term zero sum.
	\end{proof}
	
	\begin{theorem}\label{thm:c4-layer10}
		Let $A$ be a sequence over $C_4^3$ such that
		\[
		|A|\in\{19,20\},
		\qquad
		N_4(A)=N_{12}(A)=0.
		\]
		Then
		\[
		\Sigma_{10}(A)=C_4^3.
		\]
	\end{theorem}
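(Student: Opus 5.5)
Given $u\in C_4^3$, we must exhibit a $10$-term subsequence of $A$ with sum $u$. We use the same pairing device as in Theorem~\ref{thm:c4-rank3-threshold}. Reduce modulo $2$ and pair equal residues maximally. Since $N_4(A)=0$, \eqref{eq:equal-pair-sums} shows that the divided pair sums form a subset $B\subset V$ with $|B|\le 8$. Let $O$ be the unpaired residue set. Then $|A|=2|B|+|O|$ with $|O|\le 8$, so $|B|\ge 6$ when $|A|=20$ and $|B|\ge 6$ when $|A|=19$ (since $|O|$ is odd and at most $7$). Thus $|B|\in\{6,7,8\}$.

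A $10$-term subsequence built from $j$ pairs and $10-2j$ unpaired terms has sum
\[
2\sigma_{B'}+\sigma(O'),
\]
where $B'\subset B$ has $|B'|=j$ and $O'$ is the corresponding set of lifted unpaired terms. We also allow splitting a pair, i.e.\ using one of its two terms. The target $u$ first fixes a residue $\bar u\in V$. We choose a subset $O'\subset O$ (possibly augmented by single terms taken from pairs) whose residues sum to $\bar u$ and whose size has the right parity. The remaining discrepancy $u-\sigma(O')$ is then $2c$ with $c\in V$, and it must be absorbed by $\Sigma_j(B)$ for the appropriate $j$. Lemma~\ref{lem:f2-geometry} gives $\Sigma_3,\Sigma_4,\Sigma_5$ equal to all of $V$ once $|B|\ge 7$, and $\Sigma_3(B)=V$ when $|B|=6$. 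So the plan is to split into cases $|O|\in\{4,5,6,7,8\}$ (or the parity-restricted values) and in each case take $j\in\{3,4,5\}$, i.e.\ $|O'|=4,2,0$. By Lemma~\ref{lem:f2-geometry}(v), sums of even subsets of size $0,2,4$ of a suitable four-element part of $O$ with nonzero sum exhaust $V$. This handles $\bar u$ for any choice, with $j=5,4,3$ respectively. It remains to check $\Sigma_j(B)=V$ for the $j$ actually needed; this is automatic for $|B|\ge7$.

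The hypothesis $N_{12}(A)=0$ enters when $|B|=6$ or when $O$ is degenerate, e.g.\ when all residues of $O$ lie in an affine plane, or $O$ contains no four-subset with nonzero sum. I would exclude these configurations using Theorem~\ref{thm:sidorenko} ($\mathsf s_6(C_2^3)=9$, so six pair sums with zero total give twelve terms) and Lemma~\ref{lem:affine-plane-4} (if all residues of $A$ lie in an affine plane and $|A|\ge17$, a four-term zero sum exists, contradicting $N_4(A)=0$). When $|B|=6$, we need $\Sigma_4(B)$ or $\Sigma_5(B)$, which miss one element. In that case I would instead use $j=3$ together with four unpaired terms, or re-pair differently: swapping which two terms of a class with three or more elements are paired changes $B$.

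The main obstacle is the case $|B|=6$ with a small or badly shaped $O$, where the available $j$-sumsets miss a point and the parity of $\bar u$ forces a bad $j$. There I would try first to use $N_{12}=0$ to get a contradiction from the complementary count: when $|A|=20$, a $10$-sum $u$ being absent relates to the absence of $10$-sums of $\sigma(A)-u$. Alternatively, I would exploit multiplicity-three classes to produce an alternative pairing that yields a $7$-element $B$.
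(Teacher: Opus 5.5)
Your framework of pairing modulo $2$, using $N_4(A)=0$ to make $B$ a subset of $V$, choosing unpaired residues with the right quotient sum and absorbing the carry in $\Sigma_j(B)$ is the paper's. It does settle $|O|\in\{5,6,7,8\}$. The case you call the main obstacle, $|B|=6$, forces $|O|\in\{7,8\}$, so four unpaired terms and three pairs always work, since $\Sigma_4(O)=V$ and $\Sigma_3(B)=V$.

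\textbf{The missing case $|O|=3$.} Your case list drops it: for $|A|=19$ and $|B|=8$ one has $|O|=3$. This is exactly where your mechanism breaks. With $O=\{u_1,u_2,u_3\}$ there is no four-element part of $O$, and the even subset sums form the plane $H=\{0,u_1+u_2,u_1+u_3,u_2+u_3\}$. So no target with residue in the other coset $P=u_1+H$ can be reached by unpaired terms and complete pairs alone.

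The missing step is to actually break a pair. For $\bar x\in P$:
\begin{enumerate}[label=(\alph*)]
\item take one term from a pair whose common residue $\rho$ lies in $\bar x+P=H$;
\item add an odd subset $Q\subset O$ with $\sigma(Q)=\bar x+\rho$;
\item finish with four or three complete pairs from the remaining seven divided sums, whose three- and four-term sumsets are full.
\end{enumerate}
The existence of such a pair is where $N_4(A)=0$ is needed. Otherwise every residue of $A$, paired or unpaired, lies in $P$, and Lemma~\ref{lem:affine-plane-4} (as $19\ge17$) gives a four-term zero sum. It is all of $A$, not just $O$, that must lie over one plane; your proposed use of that lemma when ``all residues of $O$ lie in an affine plane'' would not suffice.

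\textbf{The case $|A|=20$, $|O|=4$.} Here $B=V$, and Lemma~\ref{lem:f2-geometry}(v) needs $\sigma(O)\ne0$. Neither tool you name excludes $\sigma(O)=0$. Theorem~\ref{thm:sidorenko} would need nine pair sums, and in fact $V$ has no six-element zero-sum subset, since $\Sigma_6(V)=V\setminus\{0\}$. The affine-plane lemma fails because the pair residues need not lie in the plane $O$. The correct argument is direct. The four lifted unpaired terms sum to $2c$, and since $\Sigma_4(V)=V$, four pairs with divided sum $-c$ complete a twelve-term zero sum, contradicting $N_{12}(A)=0$.

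\textbf{A smaller error.} Your claim that $\Sigma_5(B)=V$ once $|B|\ge7$ is false for $|B|=7$, where $\Sigma_5(B)=V\setminus\{\sigma(B)\}$. This is harmless only because $|B|=7$ forces $|O|\in\{5,6\}$, so you can always use two or four unpaired terms instead of five complete pairs.
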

	
	\begin{proof}
		Pair equal residues modulo $2$, let $B$ be the divided pair-sum set, and let
		$O$ be the unpaired residue set.  By \eqref{eq:equal-pair-sums}, $B$ is
		squarefree.  Fix a target $x\in C_4^3$, and write $\bar x$ for its image in
		$V$.
		
		Assume first that $|A|=20$.  Then $|O|\in\{4,6,8\}$.
		
		If $|O|=8$, then $O=V$ and $|B|=6$.  Choose four unpaired residues with sum
		$\bar x$.  Their lifted sum differs from $x$ by $2c$ for some $c\in V$.
		Since a six-set has full three-term sumset, three divided pair sums can be
		chosen with sum $-c$.  The resulting ten terms have sum $x$.
		
		If $|O|=6$, then $|B|=7$.  When $\bar x\ne0$, choose two unpaired residues
		with sum $\bar x$; when $\bar x=0$, choose four unpaired residues with sum
		zero.  Use respectively four or three complete pairs to absorb the carry.
		The required restricted sumsets of a seven-set are full by
		Lemma~\ref{lem:f2-geometry}(iii).
		
		Suppose $|O|=4$.  Then $B=V$.  Put $t=\sigma(O)$.  If $t=0$, the four lifted
		unpaired terms have sum $2c$ and four suitable pairs complete them to a
		forbidden twelve-term zero sum.  Hence $t\ne0$.  By
		Lemma~\ref{lem:f2-geometry}(v), every quotient target is represented by an
		even subset of $O$ of size $0$, $2$, or $4$.  We then require respectively
		$5$, $4$, or $3$ divided pair sums, and all these restricted sumsets of $V$
		are equal to $V$.
		
		Now suppose $|A|=19$.  Then $|O|\in\{3,5,7\}$.
		
		If $|O|=7$, use four unpaired terms and three pairs.  If $|O|=5$, use either
		two or four unpaired terms and respectively four or three pairs.  The
		necessary quotient and carry sumsets are full by
		Lemma~\ref{lem:f2-geometry}.
		
		It remains to consider $|O|=3$.  Write $O=\{u_1,u_2,u_3\}$.  The even subset
		sums form a two-dimensional subgroup
		\[
		H=\{0,u_1+u_2,u_1+u_3,u_2+u_3\},
		\]
		and the odd subset sums form the other coset $P$ of $H$.  There are eight
		pairs, so $B=V$.  Targets whose quotient lies in $H$ are obtained using zero
		or two unpaired terms, followed by five or four complete pairs.
		
		Let now $\bar x\notin H$.  If some pair has common quotient residue
		$\rho\in\bar x+P$, select one term from that pair and an odd subset
		$Q\subset O$ satisfying $\rho+\sigma(Q)=\bar x$.  If $|Q|=1$, use four
		complete pairs; if $|Q|=3$, use three.  After the broken pair is deleted, the
		remaining divided sums form a seven-set, whose three- and four-term sumsets
		are full.
		
		If no such pair exists, then every pair residue and every unpaired residue
		lies in the same affine plane $P$.  Lemma~\ref{lem:affine-plane-4} gives a
		four-term zero sum, contrary to the hypothesis.
	\end{proof}
	
	\begin{theorem}\label{thm:c4-layer9}
		Let $A$ be a sequence over $C_4^3$ such that
		\[
		|A|=18,
		\qquad
		N_4(A)=N_{12}(A)=0.
		\]
		Then
		\[
		\Sigma_9(A)=C_4^3.
		\]
	\end{theorem}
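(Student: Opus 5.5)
We follow the same pairing scheme as in the proof of Theorem~\ref{thm:c4-layer10}. Pair equal residues modulo $2$, let $B\subset V$ be the divided pair-sum set, and let $O$ be the unpaired residue set. By \eqref{eq:equal-pair-sums} and $N_4(A)=0$, $B$ is squarefree, so $|B|\le 8$. Since $|A|=18$ is even, \eqref{eq:pairing-length} gives $|O|\in\{2,4,6,8\}$, hence $|B|\in\{8,7,6,5\}$.

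Fix a target $x\in C_4^3$ with image $\bar x\in V$. A subsequence of length $9$ uses $s$ unpaired terms and $(9-s)/2$ complete pairs, possibly after breaking pairs, with $s$ odd. To hit $x$, we first choose an odd subset $Q\subset O$ with $\sigma(Q)=\bar x$ in $V$. Its lift differs from $x$ by $2c$ for some $c\in V$. We then choose $(9-|Q|)/2$ divided pair sums in $B$ with sum $c$; the sign is irrelevant in $V$.

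The cases run as follows.
\begin{itemize}
\item $|O|=8$: here $O=V$ and $|B|=5$. Take $|Q|=3$ or $5$ with three or two pairs. Every element of $V$ is a $3$-sum of $V$, and a $5$-set has $\Sigma_3=V\setminus\{\sigma(B)\}$ and $\Sigma_2=V\setminus\{0\}$. For each $\bar x$ we can therefore try several triples $Q$ giving different carries $c$, or use $|Q|=5$ and $\Sigma_2$, to avoid the single missing value.
\item $|O|=6$: here $|B|=6$. Use $|Q|\in\{1,3,5\}$ with $4$, $3$ or $2$ pairs. Lemma~\ref{lem:f2-geometry}(ii) gives $\Sigma_3(B)=V$, so $|Q|=3$ works whenever $\bar x\in\Sigma_3(O)$, and $\Sigma_3(O)=V$ for a $6$-set.
\item $|O|=4$: here $|B|=7$, so $\Sigma_3(B)=\Sigma_4(B)=V$. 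The odd subsets of $O$ of sizes $1$ and $3$ represent all of $V$ when $\sigma(O)\ne 0$, by Lemma~\ref{lem:f2-geometry}(v). If $\sigma(O)=0$, then $O$ is an affine plane or a linear plane. We then break one pair whose residue lies outside the relevant coset, as in the $|O|=3$ subcase of Theorem~\ref{thm:c4-layer10}. If no such pair exists, all residues lie in one affine plane, and Lemma~\ref{lem:affine-plane-4} (with $18\ge17$) gives a four-term zero sum, a contradiction.
\item $|O|=2$: here $B=V$. Odd subsets of $O$ reach only $O$ itself, so targets with $\bar x\notin O$ require breaking pairs. Choosing one term from a pair with residue $\rho$ and $Q\subset O$ with $\rho+\sigma(Q)=\bar x$ leaves seven divided sums, with full $\Sigma_3$ and $\Sigma_4$. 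Breaking two pairs (two single terms plus one element of $O$, or three single terms) gives more flexibility. The obstruction again occurs only if all residues avoid the needed classes, which forces them into an affine plane, contradicting Lemma~\ref{lem:affine-plane-4}.
\end{itemize}

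We expect the main obstacle to be the small-$|O|$ cases, and possibly the $|O|=8$ case. In these the quotient part is fine, but the restricted sumset of $B$ may miss exactly one carry value. The hypothesis $N_{12}(A)=0$ is needed here. As in Theorem~\ref{thm:c4-layer10}, if the missing configuration occurs, the available unpaired terms together with four pairs would complete a twelve-term zero sum. Verifying this case-by-case accounting of carries is where the proof requires care.
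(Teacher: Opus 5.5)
Your cases $|O|=6$ and $|O|=4$ with $\sigma(O)\ne0$ are correct and agree with the paper, but the case $|O|=8$ has a genuine gap. There $|B|=5$ and $\Sigma_3(B)=V\setminus\{d\}$ with $d=\sigma(B)$. The whole content of the case is to exclude that \emph{every} triple $Q\subset O$ with $\sigma(Q)=\bar x$ demands the carry $d$; saying one can ``try several triples giving different carries'' assumes exactly this. The option $|Q|=5$ does not rescue it. The five-sets of quotient sum $\bar x$ are the complements of those triples. Let $T$ be the sum of the eight lifted unpaired terms and $\tilde d$ a lift of $d$; for targets with $T=2(x-\tilde d)$, the five-set attempt fails exactly when the triple attempt does. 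Eight unpaired terms plus two pairs show that $N_{12}(A)=0$ forces $T=0$, but this still leaves every $x$ with $\bar x=d$. The missing idea is the paper's averaging argument. The seven triples of sum $\bar x$ are $P\setminus\{\bar x\}$ for the seven affine planes $P\ni\bar x$, so universal failure makes all seven lifted plane sums equal to one value $C$. Adding them counts $\bar x$ seven times and every other point three times, both $\equiv-1\pmod 4$, so $-T=7C=-C$. Hence $T=C$, and the complement of each such plane is a four-term zero sum. The contradiction comes from $N_4(A)=0$, not from a twelve-term zero sum as your last paragraph suggests.

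Two further repairs are needed. In the branch $|O|=4$, $\sigma(O)=0$, the pair-breaking argument of Theorem~\ref{thm:c4-layer10} does not transfer. There seven divided sums survive the broken pair; here only six do, and their $\Sigma_2$ and $\Sigma_4$ are not full. So the target $\bar x=\rho$ is left open when $\rho$ is the only pair residue outside the affine plane $O$. Instead, exclude this branch outright by the twelve-term argument you mention at the end: $\Sigma_4(B)=V$ for $|B|=7$, so the four unpaired terms (sum $2c$) and four pairs of divided sum $c$ give a twelve-term zero sum.

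For $|O|=2$, your assertion that failure forces one affine plane is true but must be checked on the support $R$ of the pair residues. Put $t=u+v$ and $\Delta=\{0\}\cup\{\rho_i+\rho_j:i\ne j\}$. Your constructions are one broken pair with $|Q|\in\{0,2\}$ (the size of $Q$ must now be even), and one unpaired term with zero or two broken pairs. Together they cover all lifts of $R\cup(R+t)\cup(u+\Delta)\cup(v+\Delta)$. Theorem~\ref{thm:sidorenko} gives $|R|\ge3$, since a residue carried by four pairs would yield a four-term zero sum. If $|R|\ge5$, or $R$ is a non-planar four-set, then $(u+\Delta)\cup(v+\Delta)=V$. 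Otherwise $\Delta=H$ is the direction of the plane $P\supset R$, failure forces $t\in H$, and $R\cup(R+t)=P$. This leaves only the one-plane configuration excluded by Lemma~\ref{lem:affine-plane-4}. Done this way, you even avoid the paper's three-broken-pair switching argument for $|R|=3$.
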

	
	\begin{proof}
		Pair equal residues modulo $2$.  Let $B$ be the divided pair-sum set and $O$
		the unpaired residue set.  Since $B\subset V$, one has
		$|O|\in\{2,4,6,8\}$.  Fix $x\in C_4^3$ and write $\bar x\in V$ for its
		quotient residue.
		
		If $|O|=6$, then $|B|=6$.  Choose three unpaired residues with sum $\bar x$,
		and choose three pair sums to absorb the carry.  Both choices are possible by
		Lemma~\ref{lem:f2-geometry}(ii).
		
		If $|O|=4$, then $|B|=7$.  Put $t=\sigma(O)$.  If $t=0$, the four unpaired
		terms together with four suitable pairs form a forbidden twelve-term zero
		sum.  Thus $t\ne0$.  The odd subset sums of $O$, of sizes one and three,
		exhaust $V$.  Use respectively four or three complete pairs.
		
		Suppose $|O|=8$.  Then $O=V$ and $|B|=5$.  By
		Lemma~\ref{lem:f2-geometry}(i),
		\[
		\Sigma_3(B)=V\setminus\{d\}
		\]
		for a unique $d\in V$.  Choose a three-subset $Q\subset O$ with quotient sum
		$\bar x$.  Unless the required carry equals $d$, three pair sums complete the
		construction.
		
		Assume that every such $Q$ requires the exceptional carry $d$.  The triples
		of sum $\bar x$ are obtained by deleting $\bar x$ from the seven affine
		planes through $\bar x$.  It follows that the sums in $C_4^3$ of the four
		lifted terms in all these planes are equal to a common value $C$.  Let $T$ be
		the sum of all eight lifted unpaired terms.  In the sum of the seven plane
		sums, the point $\bar x$ is counted seven times and every other point three
		times.  Modulo $4$, both multiplicities are $-1$, so $-T=-C$, and hence
		$T=C$.  The complement of each plane is therefore a four-term zero sum, a
		contradiction.
		
		It remains to treat $|O|=2$.  Then $B=V$.  Write $O=\{u,v\}$ and
		$t=u+v\ne0$.  Let $\rho_i\in V$ be the common quotient residue of the $i$th
		pair and put
		\[
		\Delta=\{0\}\cup\{\rho_i+\rho_j:i\ne j\}.
		\]
		If
		\begin{equation}\label{eq:c4-covering}
			(u+\Delta)\cup(v+\Delta)=V,
		\end{equation}
		then the proof is immediate.  Indeed, for quotient residues $u$ and $v$, use
		one unpaired term and four complete pairs.  For a residue
		$u+\rho_i+\rho_j$, use the $u$-term, one term from each of the two indicated
		pairs, and three complete pairs among the remaining six.  The divided sums of
		those six pairs have full three-term sumset.  The same argument applies with
		$v$.
		
		We show that failure of \eqref{eq:c4-covering} contradicts $N_4(A)=0$.  Let
		$R$ be the support of the multiset $\{\rho_1,\ldots,\rho_8\}$.  No residue
		occurs four times: otherwise eight terms of $A$ lie above one quotient class,
		and, after translation and division by $2$, Theorem~\ref{thm:sidorenko}
		produces a four-term zero sum.  Hence $|R|\ge3$.
		
		If $|R|\ge5$, then $\Delta=V$.  If $|R|=4$ and $R$ is not an affine plane,
		its six pairwise sums are distinct, so $|\Delta|=7$; the two translates by
		$u$ and $v$ cover $V$.  Thus failure is possible only when $|R|=3$, or when
		$R$ is an affine plane.  In both cases $\Delta=H$ for a two-dimensional
		subgroup $H\le V$, and failure implies $t\in H$.  The pair residues lie in
		one $H$-coset $P$, while $u$ and $v$ lie in one $H$-coset $Q$.  If $P=Q$,
		all terms of $A$ lie above one affine plane, contrary to
		Lemma~\ref{lem:affine-plane-4}.  Hence $P\ne Q$.
		
		If $|R|=4$, then $R=P$.  Quotient targets in $Q$ are already covered by
		\eqref{eq:c4-covering}.  For a target $y\in P$, choose a pair with
		$t+\rho_i=y$, take both unpaired terms and one term from that pair, and use
		three complete pairs among the remaining seven.  The carry is arbitrary.
		
		Finally suppose $|R|=3$.  The three multiplicities are $3,3,2$.  Let $r^*$ be
		the missing point of the affine plane $P$.  The only quotient residue not yet
		covered is $y^*=t+r^*$.  Since translation by nonzero $t\in H$ has no fixed
		point on $P$, one has $y^*\in R$.  Choose three pair indices whose common
		residues sum to $y^*$.  Every pair index belongs to at least one such triple:
		an index of residue $y^*$ may be combined with two indices of another common
		residue, while an index of a different residue may be combined with a second
		index of that residue and one $y^*$-index.
		
		Select one term from each of these three pairs.  The remaining five divided
		pair sums have a three-term sumset of size seven.  If one broken pair consists
		of distinct terms, switching the selected term changes the carry by a
		nonzero element of $V$, so one of the two carries lies in that seven-set.
		Thus failure would force every pair in every admissible triple to be a
		doubled singleton.  Since every pair index lies in such a triple, all eight
		pairs are doubled singletons.  Their divided sums would then have the same
		support as the common quotient residues, of size three, contradicting
		$B=V$.
	\end{proof}
	
	\begin{lemma}\label{lem:fano-separation-4}
		Let $O$ be a seven-element subset of $V$, and choose a lift
		$\widetilde x\in C_4^3$ for every $x\in O$.  The sums
		\[
		\sum_{x\in L}\widetilde x,
		\]
		where $L\subset O$ ranges over the three-element zero-sum subsets, cannot all
		be equal.
	\end{lemma}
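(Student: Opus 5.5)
The plan is to split into cases according to which point of $V$ is missing from $O$. We then suppose, for contradiction, that all the relevant line sums equal a common value $C\in C_4^3$. In each case we add up suitable line sums and derive an identity involving $2\widetilde x$. The key fact used throughout is that the map $x\mapsto 2\widetilde x$ is the injective map $V\to 2C_4^3\cong V$. It does not depend on the chosen lift, since two lifts of $x$ differ by an element of $2C_4^3$, which is killed by $2$.

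\emph{Case $O=V\setminus\{0\}$.} The three-element zero-sum subsets of $O$ are exactly the seven lines of the Fano plane. Through each point $x$ pass three lines. Their union is all of $O$, and $x$ is counted three times while every other point is counted once. Let $T=\sum_{y\in O}\widetilde y$. Summing the three line sums through $x$ gives
\[
3C = 2\widetilde x + T .
\]
Hence $2\widetilde x=3C-T$ is the same for all seven points $x$. This contradicts injectivity of $x\mapsto 2\widetilde x$, since $|O|\ge 2$.

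\emph{Case $O=V\setminus\{a\}$ with $a\ne0$.} Now $0\in O$. A zero-sum triple of distinct elements cannot contain $0$, because its other two elements would then have to be equal. So the relevant triples are the four Fano lines avoiding $a$. Each nonzero $x\ne a$ lies on exactly two of them; the third line through $x$ is $\{x,a,x+a\}$. The two lines through $x$ avoiding $a$ together cover every nonzero point except $a$ and $x+a$, with $x$ counted twice. Put $T'=\sum_{y\in V\setminus\{0,a\}}\widetilde y$. Adding these two line sums gives
\[
2C = 2\widetilde x + T' - \widetilde x - \widetilde{x+a},
\qquad\text{so}\qquad
\widetilde x-\widetilde{x+a} = 2C-T'.
\]
The right-hand side is independent of $x$. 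Replacing $x$ by $x+a$ yields $\widetilde{x+a}-\widetilde x=2C-T'$ as well. Hence $2(\widetilde x-\widetilde{x+a})=0$, that is, $2\widetilde x=2\widetilde{x+a}$. This is impossible, because these two elements correspond under the injection above to the distinct points $x$ and $x+a$.

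These two cases exhaust the seven-element subsets of $V$, which proves the lemma. There is no serious obstacle here. The only points needing care are the incidence counts, namely which points each chosen family of lines covers and with what multiplicities, and the observation that the lifted point $0$ never enters a triple in the second case.
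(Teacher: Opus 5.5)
Your proof is correct. It uses the same case split as the paper ($a=0$ versus $a\ne0$), but a different mechanism produces the contradiction.

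The paper composes the lifts with a homomorphism $C_4^3\to C_4$ attached to a nonzero linear functional $\lambda$ on $V$. Points with $\lambda=0$ then get even scalar lifts and points with $\lambda=1$ get odd ones, and each contradiction is a parity clash $2\cdot(\text{odd})=2\cdot(\text{even})$ in $C_4$.
\begin{itemize}
\item For $a=0$, the paper first extracts $e_1+e_2+e_3=K$ from the kernel line and $o_0+\dots+o_3=0$ from the sum of all seven line equations. Only then do the three lines through $o_0$ give $2o_0=2K$.
\item For $a\ne0$, it chooses $\lambda(a)=1$ so that the kernel line survives, and combines it with the other three surviving lines.
\end{itemize}

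You instead use the lift-independent injection $V\to 2C_4^3$, $x\mapsto 2\widetilde x$. This removes the need to choose a functional or to compute any preliminary totals.
\begin{itemize}
\item For $a=0$, the three lines through a single point already force $2\widetilde x$ to be constant in $x$.
\item For $a\ne0$, the two surviving lines through $x$ make $\widetilde x-\widetilde{x+a}$ independent of $x$, and the involution $x\leftrightarrow x+a$ finishes the argument.
\end{itemize}

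Your version is coordinate-free and slightly shorter. The paper's version reduces the final contradiction to a concrete check in a cyclic group. Neither buys anything extra for the later use in Lemma~\ref{lem:fano-separation-general}, which reduces modulo $4$ and then applies this lemma as a black box.
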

	
	\begin{proof}
		Let $a$ be the missing point of $V\setminus O$.  Compose the lifts with a
		coordinate homomorphism associated with a nonzero linear functional
		$\lambda:V\to\F_2$.
		
		Suppose first that $a=0$.  The seven nonzero points consist of three points
		with $\lambda=0$ and four with $\lambda=1$.  Write their scalar lifts modulo
		$4$ as even elements $e_1,e_2,e_3$ and odd elements
		$o_0,o_1,o_2,o_3$.  If all seven line sums equal $K$, the kernel line gives
		$e_1+e_2+e_3=K$.  Summing all line equations, and using that every point lies
		on three Fano lines, yields
		\[
		3(e_1+e_2+e_3+o_0+o_1+o_2+o_3)=7K,
		\]
		so $o_0+o_1+o_2+o_3=0$ modulo $4$.  Summing the three lines through $o_0$
		then gives $2o_0=2K$, which is impossible because $o_0$ is odd and $K$ is
		even.
		
		Suppose now that $a\ne0$, and choose $\lambda$ with $\lambda(a)=1$.  The
		surviving zero-sum triples consist of the kernel line and three further
		lines.  Summing the latter three equations gives
		\[
		2(o_1+o_2+o_3)+(e_1+e_2+e_3)=3K.
		\]
		Using the kernel-line equation gives
		$2(o_1+o_2+o_3)=2K$, again impossible because the left side is $2$ modulo
		$4$ and the right side is $0$.
	\end{proof}
	
	\begin{theorem}\label{thm:c4-one-hole}
		Let $A$ be a sequence over $C_4^3$ such that
		\[
		|A|=17,
		\qquad
		N_4(A)=N_{12}(A)=0.
		\]
		Then
		\[
		|C_4^3\setminus\Sigma_9(A)|\le1.
		\]
	\end{theorem}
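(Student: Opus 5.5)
We follow the pairing strategy used in Theorems~\ref{thm:c4-layer10} and~\ref{thm:c4-layer9}. First we pair equal residues modulo $2$. Let $B\subset V$ be the divided pair-sum set; it is squarefree by \eqref{eq:equal-pair-sums}. Let $O$ be the unpaired residue set. Since $|A|=17$ is odd and $|B|\le 8$, \eqref{eq:pairing-length} gives $|O|\in\{1,3,5,7\}$ with $|B|=(17-|O|)/2\in\{8,7,6,5\}$. We fix a target $x\in C_4^3$ with residue $\bar x$. The goal is to realise $x$ as a sum of nine terms: an odd number $s\in\{1,3,5,7\}$ of unpaired terms together with $(9-s)/2$ complete pairs. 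Possibly we also break a pair and use a single term of it, as in the $|O|=3$ case of Theorem~\ref{thm:c4-layer10}. The carry $2c$ produced by the lifted unpaired terms must then be absorbed by a restricted sumset of $B$.

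The cases $|O|=1,3,5$ should be handled directly.
\begin{enumerate}[label=\textup{(\alph*)}]
\item If $|O|=1$, then $B=V$. Use the single unpaired term with four pairs when $\bar x$ equals that residue. Otherwise use one unpaired term, one term from each of two pairs whose residues sum appropriately, and three complete pairs among the remaining six, whose three-term sumset is full by Lemma~\ref{lem:f2-geometry}(ii). The obstruction analysis copies the $|O|=2$ case of Theorem~\ref{thm:c4-layer9}: failure forces the residues into one affine plane, contradicting Lemma~\ref{lem:affine-plane-4}, or forces doubled singletons, contradicting $B=V$.
\item If $|O|=3$, then $|B|=7$. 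Use one or three unpaired terms with four or three pairs, since $\Sigma_3$ and $\Sigma_4$ of a seven-set are full by Lemma~\ref{lem:f2-geometry}(iii). Residues in the even coset $H$ are reached by breaking a pair as in Theorem~\ref{thm:c4-layer10}. The affine-plane obstruction again kills the failure case.
\item If $|O|=5$, then $|B|=6$. Odd subsets of a five-set of sizes $1,3,5$ cover $V$ by Lemma~\ref{lem:f2-geometry}(i) and complementation. The required sumsets $\Sigma_4,\Sigma_3,\Sigma_2$ of a six-set miss at most the element $0$ for $\Sigma_2$, and at most a small set for $\Sigma_4$. Using both three-unpaired/three-pair and one-unpaired/four-pair representations, each target should be reachable.
\end{enumerate}
In all three cases we aim to show $\Sigma_9(A)=C_4^3$ outright.

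The case $|O|=7$ carries the real content and produces the single possible hole. Here $|B|=5$, so $\Sigma_2(B)=V\setminus\{0\}$ and $\Sigma_3(B)=V\setminus\{d\}$ by Lemma~\ref{lem:f2-geometry}(i). We write $x$ in one of two forms:
\begin{enumerate}[label=\textup{(\alph*)}]
\item five unpaired terms plus two pairs, where the carry fails only if it equals $0$;
\item three unpaired terms plus three pairs, where the carry fails only if it equals $d$.
\end{enumerate}
Seven unpaired terms plus one pair is also available, with carry required to lie in $B$.

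For $\bar x=\sigma(O)$, the relevant three-subsets of $O$ are the complements of four-element zero-sum subsets. Equivalently, taking the zero-sum three-subsets $L$ of $O$, the five-term choices are $O\setminus L$. If $x$ were missed via (a), all sums $\sum_{y\in L}\widetilde y$ over zero-sum triples $L$ would coincide, since the carry is forced to equal one fixed value. Lemma~\ref{lem:fano-separation-4} forbids exactly this, so targets with residue $\sigma(O)$ are all hit.

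For $\bar x\neq\sigma(O)$, we play the three-term and five-term representations against each other, mimicking the plane-sum computation in the $|O|=8$ case of Theorem~\ref{thm:c4-layer9}. If every triple with sum $\bar x$ needs carry $d$ and every five-subset needs carry $0$, the lifted sums satisfy rigid linear relations. Summing these relations modulo $4$ should either produce a four-term zero sum, contradicting $N_4(A)=0$, or a twelve-term zero sum, contradicting $N_{12}(A)=0$. Alternatively, the relations pin the lifted configuration down so tightly that for each residue class at most one value of $x$ escapes. We must then check that the escaping residue class is unique, so that across all of $C_4^3$ at most one element is missed.

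This last counting step is the main obstacle. It requires showing that the rigid configuration forced by simultaneous failure can occur for only one residue $\bar x$ and one carry. We would try to derive it by expressing the failure conditions as equalities of Fano-line sums, as in Lemma~\ref{lem:fano-separation-4}, and by comparing two distinct residues.
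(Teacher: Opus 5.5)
There is a genuine gap, and the proposal puts the difficulty in the wrong branch. In the case $|O|=7$, your treatment of $\bar x=\sigma(O)$ is miscounted. The set $O\setminus L$ has four elements, not five. In fact no five-subset of $O$ has sum $\sigma(O)$, since its complement would be a two-element zero-sum set, so route (a) never applies to that residue. The case $\bar x\ne\sigma(O)$ is then left open as ``the main obstacle''.

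The missing observation is that route (b) alone handles every residue. A target $x$ is missed only if every triple $Q\subset O$ with $\sigma(Q)=\bar x$ has lifted sum $x-2d$, i.e.\ all these lifted triple sums coincide. Translating every term by a fixed $w$ shifts nine-term sums by $9w=w$ and preserves $N_4$, $N_{12}$ and the pairing data. So you may assume $\bar x=0$, and then Lemma~\ref{lem:fano-separation-4} forbids the coincidence; equivalently, apply Lemma~\ref{lem:fano-separation-general} with $q=2$. Hence the branch $|O|=7$ is completely full. There is no hole there, and no interplay between the three- and five-term representations is needed.

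Conversely, the branches you call ``handled directly'' are where a missing element can actually occur, and your fullness claims there are unsupported. Take $|O|=5$ and $t=\sigma(O)$:
\begin{itemize}
\item No three-subset of $O$ sums to $t$, because $\Sigma_3(O)=V\setminus\{t\}$.
\item The one-unpaired route needs $t\in O$, and it still loses one carry since $\Sigma_4(B)=V\setminus\{\sigma(B)\}$.
\item The five-unpaired/two-pair route loses the lift equal to the sum of the five unpaired terms, because $0\notin\Sigma_2(B)$.
\end{itemize}
So your listed representations leave one lift over $t$ uncovered, and ``should be reachable'' does not close it. Likewise, for $|O|=3$, suppose exactly one distinct pair residue $r$ lies in $H$. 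The broken-pair construction for residue $r$ uses four of the remaining six divided sums, whose four-term sumset has only seven elements. Lemma~\ref{lem:affine-plane-4} only guarantees that such an $r$ exists.

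The paper does not attempt fullness in these branches. It shows each leaves at most one uncovered lift, which suffices because each $A$ falls into exactly one branch. This also removes your worry about comparing escaping residues across branches. To repair the argument:
\begin{itemize}
\item prove those one-lift bounds for $|O|=3$ and $|O|=5$;
\item carry out in detail the doubled-singleton argument you sketch for $|O|=1$, whose admissible triples differ from those in Theorem~\ref{thm:c4-layer9};
\item close $|O|=7$ as above.
\end{itemize}
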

	
	\begin{proof}
		Pair equal residues modulo $2$.  The divided pair sums form a subset $B$ of
		$V$, and
		\[
		|O|\in\{1,3,5,7\}.
		\]
		
		Suppose $|O|=7$.  Then $|B|=5$.  Translation reduces an arbitrary target to
		zero, since multiplication by $9$ is the identity on $C_4^3$.  For a
		zero-sum triple $L\subset O$, use its three lifted terms and three complete
		pairs.  The three-term sumset of $B$ misses a unique carry.  If every such
		attempt failed, all lifted zero-sum triples would have the same sum,
		contradicting Lemma~\ref{lem:fano-separation-4}.  Thus this branch is in fact
		full.
		
		Suppose $|O|=5$.  Then $|B|=6$.  Put $t=\sigma(O)$.  Since
		$\Sigma_3(O)=V\setminus\{t\}$, every quotient target other than $t$ is
		represented by three unpaired terms and three pairs.  For a target with
		quotient residue $t$, use all five unpaired terms and two pairs.  The
		two-term sumset of a six-set is $V\setminus\{0\}$, so at most the unique
		target requiring zero carry can fail.
		
		Suppose $|O|=3$.  Let $H$ be the plane of even subset sums of $O$, and let
		$P$ be the coset of odd subset sums.  There are seven pairs.  The four
		quotient residues in $P$ are fully covered: use one unpaired term and four
		pairs, or all three unpaired terms and three pairs.
		
		Let $R$ be the support of the common quotient residues of the seven pairs.
		Using two unpaired terms, one term from a pair of residue $r$, and three
		complete pairs among the remaining six covers every lift of every quotient
		residue in $(H\setminus\{0\})+r$.  If all pair residues lay in $P$, every
		term of $A$ would lie above one affine plane, contrary to
		Lemma~\ref{lem:affine-plane-4}.  Hence some pair residue lies in the other
		coset.  If two distinct pair residues lie there, their sets
		$(H\setminus\{0\})+r$ cover the whole coset.  If there is only one distinct
		residue $r$ there, all quotient points except possibly $r$ are covered.  To
		cover residue $r$, break one such pair and use four complete pairs from the
		remaining six divided sums.  Their four-term sumset has size seven, so at
		most one lift of $r$ can fail.
		
		Finally suppose $|O|=1$.  There are eight pairs, and their divided sums are
		all of $V$.  Let $o$ be the unpaired quotient residue, let $\rho_i$ be the
		common residue of the $i$th pair, let $R$ be the support of the multiset
		$\{\rho_i\}$, and put
		\[
		\Delta=\{0\}\cup\{\rho_i+\rho_j:i\ne j\}.
		\]
		Every lift of a quotient residue in $R\cup(o+\Delta)$ is represented.  For
		$R$, break one pair and use four complete pairs from a seven-set.  The
		residue $o$ is represented by the unpaired term and four complete pairs.  For
		$o+\rho_i+\rho_j$, use the unpaired term, two broken pairs, and three complete
		pairs from a six-set.
		
		No common residue occurs four times, so $|R|\ge3$.  If $|R|\ge5$, then
		$\Delta=V$.  Suppose $|R|=4$.  If $R$ is an affine plane $P$ with direction
		subgroup $H$, then $\Delta=H$.  If $o+H\ne P$, the two planes cover $V$; if
		$o+H=P$, all terms lie above $P$, a contradiction.  If $R$ is not an affine
		plane, put $t=\sigma(R)\ne0$.  Then
		\[
		\Delta=V\setminus\{t\},
		\qquad
		t+R=V\setminus R.
		\]
		The only possible uncovered quotient residue is $y=o+t$.  If $o\notin R$,
		then $y\in R$ and is already covered.  If $o\in R$, choose one pair from each
		of the other three support residues.  Their quotient sum is $t+o=y$.  After
		these three pairs are broken, the remaining five divided sums have a
		three-term sumset of size seven.  Hence at most one lift of $y$ can fail.
		
		If $|R|=3$, the multiplicities are $3,3,2$, and
		$R=P\setminus\{r^*\}$ for an affine plane $P$ with direction subgroup $H$.
		If $o+H=P$, all terms lie above $P$, a contradiction.  Otherwise the only
		possible uncovered quotient residue is $r^*$.  Choose one pair from each of
		the three support residues; their quotient sum is $r^*$.  The remaining five
		divided sums again leave at most one carry uncovered.  This completes the
		proof.
	\end{proof}
	
	For a dyadic integer $q\ge4$, let the following four assertions be denoted by
	$E(q)$, $P(q)$, $R(q)$, and $H(q)$:
	\begin{align*}
		E(q):\quad & |S|=6q-3\ \Longrightarrow\ N_{3q}(S)>0;\\
		P(q):\quad & |A|\in\{6q-4,6q-5\},\ N_q(A)=N_{3q}(A)=0\\
		&\hspace{42mm}\Longrightarrow\ \Sigma_{3q-2}(A)=C_q^3;\\
		R(q):\quad & |A|=6q-6,\ N_q(A)=N_{3q}(A)=0\\
		&\hspace{42mm}\Longrightarrow\ \Sigma_{3q-3}(A)=C_q^3;\\
		H(q):\quad & |A|=6q-7,\ N_q(A)=N_{3q}(A)=0\\
		&\hspace{42mm}\Longrightarrow\ |C_q^3\setminus\Sigma_{3q-3}(A)|\le1.
	\end{align*}
	Theorems~\ref{thm:c4-rank3-threshold}, \ref{thm:c4-layer10},
	\ref{thm:c4-layer9}, and \ref{thm:c4-one-hole} show that
	\begin{equation}\label{eq:four-base-profiles}
		E(4),\quad P(4),\quad R(4),\quad H(4)
	\end{equation}
	all hold.
	
	We have two immediate consequences.
	
	\begin{lemma}\label{lem:adjacent-layers}
		Assume $E(q)$ and $P(q)$.
		\begin{enumerate}[label=\textup{(\roman*)}]
			\item If $|A|=6q-4$ and $N_q(A)=N_{3q}(A)=0$, then
			\[
			\Sigma_{3q-3}(A)=\Sigma_{3q-2}(A)=\Sigma_{3q-1}(A)=C_q^3.
			\]
			\item If $|A|=6q-5$ and $N_q(A)=N_{3q}(A)=0$, then
			\[
			\Sigma_{3q-3}(A)=\Sigma_{3q-2}(A)=C_q^3.
			\]
		\end{enumerate}
	\end{lemma}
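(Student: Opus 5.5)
The plan is to use only two tools: complementation inside $A$, and the extremal property $E(q)$ applied to $A$ extended by one extra term. The basic observation is the set-valued form of \eqref{eq:complement}. For any sequence $A$ and any $0\le t\le|A|$, passing from a subsequence $T\mid A$ to its complement gives a bijection between subsequences of length $t$ and those of length $|A|-t$, with $\sigma(A T^{-1})=\sigma(A)-\sigma(T)$. Hence
\[
\Sigma_{|A|-t}(A)=\sigma(A)-\Sigma_t(A),
\]
and in particular $\Sigma_t(A)=C_q^3$ if and only if $\Sigma_{|A|-t}(A)=C_q^3$.

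For (ii), let $|A|=6q-5$ with $N_q(A)=N_{3q}(A)=0$. Then $P(q)$ gives $\Sigma_{3q-2}(A)=C_q^3$ directly. Since $(6q-5)-(3q-2)=3q-3$, complementation gives $\Sigma_{3q-3}(A)=\sigma(A)-\Sigma_{3q-2}(A)=C_q^3$. No further input is needed here.

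For (i), let $|A|=6q-4$ with $N_q(A)=N_{3q}(A)=0$. Again $P(q)$ gives $\Sigma_{3q-2}(A)=C_q^3$. The two outer layers $3q-3$ and $3q-1$ are complementary, because $(6q-4)-(3q-1)=3q-3$. So it suffices to show that $\Sigma_{3q-1}(A)=C_q^3$.
\begin{itemize}
\item Fix an arbitrary $g\in C_q^3$ and form $A\bdot g$, a sequence of length $6q-3$.
\item By $E(q)$, $A\bdot g$ contains a zero-sum subsequence $T$ of length $3q$.
\item Since $N_{3q}(A)=0$, $T$ must use the adjoined term $g$. Write $T=g\bdot T'$ with $T'\mid A$.
\item Then $|T'|=3q-1$ and $\sigma(T')=-g$.
\end{itemize}
As $g$ ranges over $C_q^3$, so does $-g$, so $\Sigma_{3q-1}(A)=C_q^3$. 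Complementation then gives $\Sigma_{3q-3}(A)=\sigma(A)-\Sigma_{3q-1}(A)=C_q^3$.

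There is essentially no obstacle. The only point to check is that adjoining one term is legitimate in the step using $E(q)$. It is, because $E(q)$ applies to every sequence of length $6q-3$, with no zero-sum-free hypothesis, and we only use the hypothesis $N_{3q}(A)=0$ to force $g\in T$. The hypothesis $N_q(A)=0$ enters only through $P(q)$.
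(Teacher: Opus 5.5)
Your proposal is correct and follows essentially the same route as the paper. You take the $(3q-2)$-layer from $P(q)$, adjoin one term to reach length $6q-3$ so that $E(q)$ together with $N_{3q}(A)=0$ gives the $(3q-1)$-layer, and obtain the $(3q-3)$-layer in both parts by complementation inside $A$.
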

	
	\begin{proof}
		The middle layers are $P(q)$.  In the first case, append a marked term $-x$
		to $A$.  The resulting sequence has length $6q-3$, so $E(q)$ supplies a
		$3q$-term zero sum.  It must use the marked term, and deleting it gives a
		$(3q-1)$-term subsequence of $A$ with sum $x$.  Complementation in $A$ gives
		the $(3q-3)$-layer.  The second assertion follows directly from
		complementation in a sequence of length $6q-5$.
	\end{proof}
	
	\begin{lemma}\label{lem:delete-one-hole}
		Assume $H(q)$.  If $|B|=6q-6$ and $N_q(B)=N_{3q}(B)=0$, then
		\[
		|C_q^3\setminus\Sigma_{3q-2}(B)|\le1,
		\qquad
		|C_q^3\setminus\Sigma_{3q-4}(B)|\le1.
		\]
	\end{lemma}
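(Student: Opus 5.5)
We derive both estimates by deleting one term of $B$ and applying $H(q)$, then using complementation inside $B$ for the second estimate.

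\emph{First estimate.} Let $B$ have length $6q-6$ with $N_q(B)=N_{3q}(B)=0$.
\begin{enumerate}[label=\textup{(\alph*)}]
\item Choose any term $g\mid B$ and let $A=Bg^{-1}$, so that $|A|=6q-7$.
\item Every subsequence of $A$ is a subsequence of $B$, hence $N_q(A)=N_{3q}(A)=0$.
\item Thus $H(q)$ applies to $A$, and $|C_q^3\setminus\Sigma_{3q-3}(A)|\le1$.
\item For any $u\in\Sigma_{3q-3}(A)$, take a $(3q-3)$-term subsequence $T\mid A$ with $\sigma(T)=u$. Adjoining the deleted term gives $Tg\mid B$, of length $3q-2$ and sum $u+g$.
\item Hence $g+\Sigma_{3q-3}(A)\subseteq\Sigma_{3q-2}(B)$. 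Translation is a bijection of $C_q^3$, so $\Sigma_{3q-2}(B)$ misses at most one element.
\end{enumerate}

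\emph{Second estimate.} Taking complements inside $B$ gives
\[
\Sigma_{3q-4}(B)=\sigma(B)-\Sigma_{(6q-6)-(3q-4)}(B)=\sigma(B)-\Sigma_{3q-2}(B).
\]
The map $v\mapsto\sigma(B)-v$ is a bijection of $C_q^3$, so $\Sigma_{3q-4}(B)$ also misses at most one element. This step does not need $\sigma(B)=0$, since \eqref{eq:complement} is only used at the level of sets via this bijection.

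The argument has no real obstacle. The only things to check are that the hypotheses $N_q=N_{3q}=0$ pass from $B$ to the subsequence $A$, and the index arithmetic $(6q-6)-(3q-4)=3q-2$. Neither $E(q)$ nor $P(q)$ is needed.
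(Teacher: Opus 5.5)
Your proof is correct and is essentially the paper's argument. The paper argues by contradiction: two holes $x,y$ in $\Sigma_{3q-2}(B)$ would give two holes $x-b,y-b$ in $\Sigma_{3q-3}(Bb^{[-1]})$, contradicting $H(q)$. You argue directly via the inclusion $g+\Sigma_{3q-3}(Bg^{[-1]})\subseteq\Sigma_{3q-2}(B)$, and both proofs obtain the second estimate by complementation in $B$.
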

	
	\begin{proof}
		If two distinct elements $x,y$ were missing from $\Sigma_{3q-2}(B)$, then
		for every indexed term $b\mid B$, the two distinct elements $x-b$ and $y-b$
		would be missing from $\Sigma_{3q-3}(Bb^{[-1]})$, contradicting $H(q)$.  The
		second assertion follows by complementation in $B$.
	\end{proof}
	
	We next extend the affine-plane obstruction to every dyadic exponent.
	
	\begin{lemma}\label{lem:mixed-dyadic}
		For every $m\ge2$, every sequence of length $6\cdot2^m-7$ over
		\[
		C_{2^m}^2\oplus C_{2^{m-1}}
		\]
		contains a zero-sum subsequence of length $2^m$.
	\end{lemma}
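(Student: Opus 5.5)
The plan is induction on $m$, using the same pairing-modulo-$2$ device as in the proof of Lemma~\ref{lem:affine-plane-4}. Write $q=2^m$ and $G=C_q^2\oplus C_{q/2}$. The base case $m=2$ is exactly the first assertion of Lemma~\ref{lem:affine-plane-4}: $6\cdot4-7=17$, and the group is $C_4^2\oplus C_2$.

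For the inductive step, let $m\ge3$ and assume the statement for $m-1$. The key observation is that the subgroup
\[
2G\cong C_{q/2}^2\oplus C_{q/4}=C_{2^{m-1}}^2\oplus C_{2^{m-2}}
\]
is exactly the group of the same shape one level down, while the quotient is $G/2G\cong C_2^3=V$.

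First, let $S$ be a sequence over $G$ of length $6q-7$. Reduce it modulo $2G$ and pair terms with equal image in $V$, as in the divided pair-sum construction before Lemma~\ref{lem:f2-geometry}. After maximal pairing, at most one term remains over each of the eight classes. Since $|S|$ is odd, at most $7$ terms remain unpaired. By \eqref{eq:pairing-length}, the number of pairs is therefore at least
\[
\frac{6q-7-7}{2}=3q-7=6\cdot 2^{m-1}-7.
\]

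Each pair $P_i=x_i\bdot y_i$ satisfies $x_i+y_i\in 2G$. Here I would work directly with the sums $c_i=x_i+y_i$ as elements of the group $2G$, rather than dividing by $2$. This avoids any ambiguity in halving in the smaller factor.

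Next, apply the induction hypothesis in $2G\cong C_{2^{m-1}}^2\oplus C_{2^{m-2}}$ to any $3q-7$ of the $c_i$. This yields $q/2=2^{m-1}$ indices $i$ with $\sum c_i=0$. The union of the corresponding pairs consists of $q$ terms of $S$ with total sum $0$, which is the required zero-sum subsequence of length $2^m$.

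The only points to check are the parity count and the identification of $2G$. In $C_{2^m}^2\oplus C_{2^{m-1}}$, doubling gives $2C_{2^m}\cong C_{2^{m-1}}$ and $2C_{2^{m-1}}\cong C_{2^{m-2}}$, and the quotient has $2^3=8$ classes. These are routine, so I expect no real obstacle. The mildly delicate point is that the pair count $3q-7$ matches the inductive length $6\cdot2^{m-1}-7$ exactly, and this relies on $|S|$ being odd, so that at most $7$ rather than $8$ terms are left unpaired.
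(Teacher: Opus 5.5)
Your proof is correct and matches the paper's argument step for step: the base case is Lemma~\ref{lem:affine-plane-4}, and the inductive step pairs terms modulo $2G$, leaving at most seven unpaired terms and hence at least $6\cdot2^{m-1}-7$ pair sums, to which the induction hypothesis in $2G\cong C_{2^{m-1}}^2\oplus C_{2^{m-2}}$ is applied. Working with the pair sums directly in $2G$, instead of the paper's ``divided by $2$'' phrasing, is only a cosmetic difference.
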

	
	\begin{proof}
		For $m=2$, this is Lemma~\ref{lem:affine-plane-4}.  Assume $m\ge3$ and use
		induction.  Reduce a sequence of length $6\cdot2^m-7$ modulo twice the group.
		The quotient has eight elements.  Since the length is odd, maximal pairing
		leaves at most seven unpaired terms and produces at least
		\[
		\frac{6\cdot2^m-14}{2}=6\cdot2^{m-1}-7
		\]
		pairs.  Their sums, divided by $2$, form a sequence over
		$C_{2^{m-1}}^2\oplus C_{2^{m-2}}$.  By induction, $2^{m-1}$ divided pair sums
		add to zero, and the corresponding pairs form a $2^m$-term zero sum.
	\end{proof}
	
	\begin{corollary}\label{cor:affine-plane-all}
		Let $q\ge2$ be a power of $2$.  If a sequence $A$ over $C_{2q}^3$ has length
		at least $12q-7$ and all its quotient residues modulo $2$ lie in one affine
		plane of $V$, then $A$ has a zero-sum subsequence of length $2q$.
	\end{corollary}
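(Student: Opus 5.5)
The plan mirrors the second half of the proof of Lemma~\ref{lem:affine-plane-4}: translate the affine plane to a linear plane, then apply Lemma~\ref{lem:mixed-dyadic} with $m$ chosen so that $2^m=2q$.

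\emph{Step 1: translate.} Let $P=a+H$ be the affine plane containing all quotient residues, where $H\le V$ has dimension two. Choose a lift $\widetilde a\in C_{2q}^3$ of $a$ and subtract $\widetilde a$ from every term of $A$. A subsequence of length $2q$ has its sum changed by $2q\widetilde a=0$. So zero sums of length $2q$ are preserved, and after translation every term lies in $\pi^{-1}(H)$.

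\emph{Step 2: identify the preimage.} Choose coordinates $C_{2q}^3=C_{2q}e_1\oplus C_{2q}e_2\oplus C_{2q}e_3$ so that $H$ is spanned by the images of $e_1,e_2$. This is possible because $\mathrm{GL}_3(\mathbb Z/2q\mathbb Z)$ surjects onto $\mathrm{GL}_3(\F_2)$. Then
\[
\pi^{-1}(H)=C_{2q}e_1\oplus C_{2q}e_2\oplus 2C_{2q}e_3\cong C_{2q}^2\oplus C_q .
\]

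\emph{Step 3: apply the lemma.} Write $2q=2^m$ with $m\ge2$, which holds since $q\ge2$. The preimage is then $C_{2^m}^2\oplus C_{2^{m-1}}$. Moreover $12q-7=6\cdot 2^m-7$, so the translated sequence has at least the length required by Lemma~\ref{lem:mixed-dyadic}; if it is longer, pass to a subsequence of exactly that length. The lemma then gives a zero-sum subsequence of length $2^m=2q$. Translating back does not change this, by Step~1.

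The only point needing care is the coordinate choice in Step~2, which ensures that the preimage of a plane is exactly $C_{2q}^2\oplus C_q$. Everything else is bookkeeping.
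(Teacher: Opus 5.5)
Your proposal is correct and is essentially the paper's own proof. Like the paper, you translate the affine plane to a linear one, which is harmless for zero sums of length $2q$, identify the preimage with $C_{2q}^2\oplus C_q$, and apply Lemma~\ref{lem:mixed-dyadic} with $2^m=2q$; you simply spell out the basis-lifting step and the passage to a subsequence of length exactly $12q-7$, which the paper leaves implicit.
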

	
	\begin{proof}
		Translate the affine plane to a linear plane.  The full inverse image of that
		plane is isomorphic to $C_{2q}^2\oplus C_q$, and
		Lemma~\ref{lem:mixed-dyadic} applies.
	\end{proof}
	
	For $T\subset V$, put
	\[
	\mathcal A(T)=\{0\}\cup T\cup\Sigma_2(T).
	\]
	
	\begin{lemma}\label{lem:at-most-two}
		If $T$ spans $V$, then $|V\setminus\mathcal A(T)|\le1$.  Every missing element
		is a sum of three distinct elements of $T$.
	\end{lemma}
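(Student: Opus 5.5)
The statement is a finite fact about subsets of $V=\F_2^3$, so the plan is a direct argument from a basis, with no appeal to the zero-sum machinery. Since $T$ spans $V$, it contains three linearly independent elements $e_1,e_2,e_3$. These form a basis, and every element of $V$ has the shape $\sum_{i\in I}e_i$ for a unique $I\subseteq\{1,2,3\}$.

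First we check coverage. Elements with $|I|=0$ give $0\in\mathcal A(T)$. Those with $|I|=1$ give $e_i\in T$. Those with $|I|=2$ give $e_i+e_j\in\Sigma_2(T)$, and these are sums of distinct elements because $e_i\ne e_j$. Thus $\mathcal A(T)$ contains all of $V$ except possibly $s:=e_1+e_2+e_3$, which gives $|V\setminus\mathcal A(T)|\le1$ immediately.

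For the second assertion, the only candidate for a missing element is $s$. We only need to observe that $s=e_1+e_2+e_3$ is itself a sum of three distinct elements of $T$, namely the basis vectors, which are distinct since they are independent. So any missing element lies in $\Sigma_3(T)$.

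The one point needing care is that the missing element is determined by the chosen basis, while the statement is basis-free. This causes no trouble. If $V\setminus\mathcal A(T)$ is nonempty, it consists of a single element $y$. Running the argument with any basis inside $T$ shows $y$ must equal $e_1+e_2+e_3$ for that basis, and hence $y\in\Sigma_3(T)$. I do not expect any real obstacle here; the whole proof is this short check.
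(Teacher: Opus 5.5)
Your proposal is correct and follows the paper's proof: pick a basis $e_1,e_2,e_3$ inside $T$, observe that every vector other than $e_1+e_2+e_3$ is a sum of zero, one, or two distinct basis vectors and so lies in $\mathcal A(T)$, and note that $e_1+e_2+e_3$ is itself a sum of three distinct elements of $T$. Your remark on basis-independence is harmless but unnecessary, since one fixed basis already identifies the only possible hole.
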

	
	\begin{proof}
		Choose a basis $e_1,e_2,e_3$ from $T$.  Every vector is a sum of at most three
		basis vectors, and the only vector not represented by zero, one, or two basis
		vectors is $e_1+e_2+e_3$.  Additional elements of $T$ can only reduce the
		hole set.
	\end{proof}
	
	\begin{lemma}\label{lem:two-translates}
		Let $t\ne0$.  If $\Span(T\cup\{t\})=V$, then
		\[
		\mathcal A(T)\cup\bigl(t+\mathcal A(T)\bigr)=V.
		\]
	\end{lemma}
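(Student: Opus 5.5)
We split according to whether $T$ already spans $V$, and in the spanning case use Lemma~\ref{lem:at-most-two} to locate the possible hole.

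\emph{Case 1: $T$ spans $V$.} By Lemma~\ref{lem:at-most-two}, $\mathcal A(T)$ misses at most one element $h$. If nothing is missing we are done, so assume $h$ is missing. It suffices to show that $h-t=h+t$ lies in $\mathcal A(T)$. Since $t\ne0$, we have $h+t\ne h$, and $h$ is the unique element outside $\mathcal A(T)$. Hence $h+t\in\mathcal A(T)$, and therefore $h\in t+\mathcal A(T)$.

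\emph{Case 2: $T$ does not span $V$.} Since $T\cup\{t\}$ spans $V$, the span $W=\Span(T)$ has dimension exactly $2$ and $t\notin W$. Then $V=W\cup(t+W)$, so it suffices to show $\mathcal A(T)\supseteq W$. Because $T$ spans the plane $W$, it contains two independent elements $e_1,e_2$. The four elements of $W$ are then
\[
0,\qquad e_1,\qquad e_2,\qquad e_1+e_2,
\]
and these are represented by the empty sum, by single elements of $T$, and by a sum of two distinct elements of $T$, respectively. Hence $W\subseteq\mathcal A(T)$, and the two translates cover $W\cup(t+W)=V$.

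Dimension $\le1$ for $\Span(T)$ cannot occur, since adding the single element $t$ would then give a span of dimension at most $2$, not $V$. Similarly, $T$ cannot be empty.

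There is no real obstacle here. The only step needing care is the spanning case, where we must check that translation by a nonzero $t$ cannot send the unique hole to itself. This holds simply because $t\ne0$.
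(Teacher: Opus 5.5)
Your proof is correct and follows the paper's argument essentially verbatim. You use the same split: when $T$ spans $V$, the unique possible hole $h$ of $\mathcal A(T)$ is moved to $h+t\neq h$; when $T$ spans a plane $W$ with $t\notin W$, the two translates contain the two $W$-cosets; and spans of dimension at most one are excluded by the hypothesis. The only cosmetic difference is that you verify just $W\subseteq\mathcal A(T)$ rather than the equality $\mathcal A(T)=W$ stated in the paper, and the inclusion is all that is needed.
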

	
	\begin{proof}
		If $T$ spans $V$, then $\mathcal A(T)$ misses at most one point, and its
		translate by nonzero $t$ misses a different point.  If $T$ spans a plane
		$H$, then $t\notin H$ and $\mathcal A(T)=H$, so the two sets are the two
		$H$-cosets.  The remaining possibilities do not span $V$ after adjoining one
		element.
	\end{proof}
	
	\begin{lemma}\label{lem:fano-separation-general}
		Let $O\subset V$ have seven elements, and choose a lift
		$z_x\in C_{2q}^3$ for every $x\in O$, where $q\ge2$ is a power of $2$.  Fix
		$y\in V$.  The sums
		\[
		\sum_{x\in Q}z_x,
		\qquad
		Q\in\binom O3,
		\quad
		\sum_{x\in Q}x=y,
		\]
		cannot all be equal.
	\end{lemma}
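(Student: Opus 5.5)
The plan is to reduce the statement to Lemma~\ref{lem:fano-separation-4} in two steps: a translation that turns the target $y$ into $0$, and then reduction of the lifts from $C_{2q}^3$ to $C_4^3$.

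\emph{Translation.} Fix a lift $w\in C_{2q}^3$ of $y$. Put $O'=O+y$, which is again a seven-element subset of $V$. For $x'=x+y\in O'$, define the lift $z'_{x'}=z_x+w$. In $V$ we have $3y=y$, so for three distinct $x_1,x_2,x_3\in O$,
\[
x_1+x_2+x_3=y\iff (x_1+y)+(x_2+y)+(x_3+y)=0 .
\]
Hence $Q\mapsto Q+y$ is a bijection between the $3$-subsets of $O$ with sum $y$ and the zero-sum $3$-subsets of $O'$. Under this bijection the lifted sums satisfy
\[
\sum_{x'\in Q+y}z'_{x'}=\sum_{x\in Q}z_x+3w .
\]
They therefore differ from the original sums by the constant $3w$. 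So the original sums are all equal if and only if the zero-sum triple sums for $(O',z')$ are all equal. This reduces the lemma to the case $y=0$.

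\emph{Reduction to exponent four.} Since $q\ge2$ is a power of $2$, we have $4\mid 2q$. Coordinatewise reduction gives a surjective homomorphism $\phi:C_{2q}^3\to C_4^3$. Composing $\phi$ with reduction modulo $2$ is the same as reducing $C_{2q}^3$ modulo $2$ directly, so $\phi(z'_{x'})$ is still a lift of $x'$ in the sense of Lemma~\ref{lem:fano-separation-4}. A homomorphism sends equal sums to equal sums. So if all zero-sum triple sums of the $z'$ were equal in $C_{2q}^3$, the corresponding sums of the lifts $\phi(z'_{x'})\in C_4^3$ would all be equal. This contradicts Lemma~\ref{lem:fano-separation-4} applied to the seven-set $O'$.

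\emph{Main point to check.} The only real work is the translation step. One must check that translating by $y$ maps triples of sum $y$ exactly onto zero-sum triples, which relies on $3y=y$ in characteristic $2$. One must also check that the lifts shift by a common constant. The family of zero-sum triples of a seven-set is nonempty: this is the configuration treated in Lemma~\ref{lem:fano-separation-4}, namely all seven Fano lines when $0\notin O'$, and four of them otherwise. Hence "cannot all be equal" is meaningful. Everything else is inherited directly from the exponent-four lemma.
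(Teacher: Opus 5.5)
Your proposal is correct and is essentially the paper's proof. You translate by a lift $w$ of $y$, so that triples of sum $y$ become zero-sum triples with every lifted sum shifted by $3w$, and then reduce modulo $4$ to invoke Lemma~\ref{lem:fano-separation-4}; your extra checks (that $4\mid 2q$ makes the reduction a homomorphism compatible with reduction modulo $2$, and that the family of triples is nonempty, of size $4$ or $7$) fill in details the paper leaves implicit.
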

	
	\begin{proof}
		Choose $w\in C_{2q}^3$ with image $y$ in $V$.  Replace $O$ by $O+y$ and set
		$z'_{x+y}=z_x+w$.  A triple of quotient sum $y$ becomes a zero-sum triple,
		and all lifted triple sums are shifted by the same element $3w$.  Equality is
		therefore preserved.  Reducing the lifted configuration modulo $4$ gives the
		situation excluded by Lemma~\ref{lem:fano-separation-4}.
	\end{proof}
	
	We use repeatedly the following elementary lifting observation.  Suppose
	$A$ is a sequence over $C_{2q}^3$, with divided pair-sum sequence $B$ and
	unpaired residue set $O$.  If we select $h$ unpaired terms, one term from each
	of $b$ broken pairs, and $m$ complete pairs, then the total length is
	\[
	h+b+2m.
	\]
	Once the quotient sum of the selected single terms is prescribed, the
	remaining difference from a target is even.  Dividing by $2$ produces a
	target in the appropriate $m$-term sumset of $B$ after the $b$ broken pair
	sums have been deleted.
	
	\begin{proposition}\label{prop:lift-H}
		Assume $E(q)$, $P(q)$, $R(q)$, and $H(q)$.  Then $H(2q)$ holds.
	\end{proposition}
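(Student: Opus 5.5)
The plan is to prove $H(2q)$ by the same pairing-and-lifting scheme used for Theorem~\ref{thm:c4-one-hole}, with the $C_2^3$ sumset facts about $B$ replaced by the inductive hypotheses $E(q)$, $P(q)$, $R(q)$, $H(q)$ applied to the divided pair-sum sequence.

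\emph{Setup.} Let $A$ be a sequence over $C_{2q}^3$ with $|A|=12q-7$ and $N_{2q}(A)=N_{6q}(A)=0$. Pair equal residues modulo $2$, and let $B$ (over $C_q^3$) and $O$ be as before. A $q$-term zero sum in $B$ lifts to a $2q$-term zero sum in $A$, and a $3q$-term zero sum in $B$ lifts to a $6q$-term one. Hence $N_q(B')=N_{3q}(B')=0$ for every subsequence $B'\mid B$. Since $|A|$ is odd, $|O|\in\{1,3,5,7\}$, so $|B|\in\{6q-4,\,6q-5,\,6q-6,\,6q-7\}$. A target $x$ is reached by selecting $h$ unpaired terms, single terms from $b$ broken pairs, and $m$ complete pairs, with $h+b+2m=6q-3$. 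The quotient condition fixes the single terms, and the carry must then lie in $\Sigma_m$ of $B$ with the broken pairs deleted.

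\emph{Large $|O|$.} For $|O|=7$ we have $|B|=6q-7$, and $H(q)$ says $\Sigma_{3q-3}(B)$ misses at most one carry $d$. Take $h=3$, $m=3q-3$, and a triple $Q\subset O$ with quotient sum $\bar x$. If every such $Q$ failed, each would require carry $d$, so all lifted triple sums would equal $x-2d$. This contradicts Lemma~\ref{lem:fano-separation-general}, so this branch is full.

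For $|O|=5$ we have $|B|=6q-6$, so $R(q)$ makes $\Sigma_{3q-3}(B)$ full. Then $h=3$ covers every quotient $\bar x\ne t=\sigma(O)$. For $\bar x=t$, use $h=5$ and $m=3q-4$. By Lemma~\ref{lem:delete-one-hole}, $\Sigma_{3q-4}(B)$ misses at most one carry, so at most one target fails.

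For $|O|=3$ we have $|B|=6q-5$, and Lemma~\ref{lem:adjacent-layers}(ii) makes $\Sigma_{3q-3}(B)$ and $\Sigma_{3q-2}(B)$ full. Hence the odd coset $P$ is covered using $h\in\{1,3\}$. For the even coset $H$, break one pair of residue $r$. With $h=2$, $b=1$, $m=3q-3$, the remaining $6q-6$ divided sums have full $\Sigma_{3q-3}$ by $R(q)$; this covers $(H\setminus\{0\})+r$. With $h=0$, $b=1$, $m=3q-2$, Lemma~\ref{lem:delete-one-hole} leaves at most one hole at residue $r$. If every pair residue lay in $P$, then all of $A$ would lie above one affine plane, and Corollary~\ref{cor:affine-plane-all} would give a $2q$-term zero sum. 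Otherwise the case analysis of Theorem~\ref{thm:c4-one-hole} carries over verbatim and leaves at most one hole.

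\emph{The case $|O|=1$.} Here $|B|=6q-4$, and Lemma~\ref{lem:adjacent-layers}(i) gives full $\Sigma_{3q-3}$, $\Sigma_{3q-2}$, $\Sigma_{3q-1}$. Deleting one or two broken pairs leaves length $6q-5$ or $6q-6$, where $P(q)$ and $R(q)$ again give full layers. With $o$ the unpaired residue and $R$ the support of the pair residues, we therefore cover every lift of:
\begin{itemize}
\item $o$, using $h=1$, $m=3q-2$;
\item $R$, using $b=1$, $m=3q-2$;
\item $o+\Sigma_2(R)$, and sums of two distinct pair indices of equal residue, using $h=1$, $b=2$, $m=3q-3$.
\end{itemize}
Three broken pairs with $m=3q-3$ leave $6q-7$ divided sums, so $H(q)$ allows at most one hole there. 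Writing $T$ for $R$ translated suitably, Lemmas~\ref{lem:at-most-two} and~\ref{lem:two-translates} show that the uncovered quotient residues reduce to at most one point $y$, which is a sum of three distinct residues. The exception is when $R\cup\{o\}$ fails to span, i.e.\ all terms lie above an affine plane, and that is excluded by Corollary~\ref{cor:affine-plane-all}. For the residue $y$, break three pairs whose residues sum to $y$; the $H(q)$ bound on the remaining $6q-7$ sums leaves at most one lift of $y$ missing.

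I expect this last case, together with the even coset in the case $|O|=3$, to be the main obstacle. In the $C_4^3$ base case these steps relied on Sidorenko to rule out a residue class carrying too many pairs, and that input must now be replaced by Corollary~\ref{cor:affine-plane-all} and the spanning lemmas. I would first check whether the spanning argument alone suffices, before looking for a dyadic analogue of the multiplicity bound.
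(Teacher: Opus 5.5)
Your proposal is correct and follows the paper's proof essentially case by case. It uses the same length bookkeeping in the branches $|O|=7,5,3,1$: Lemma~\ref{lem:fano-separation-general} for $|O|=7$, Lemma~\ref{lem:delete-one-hole} for the single exceptional fiber when $|O|=5$ or $3$, and Lemma~\ref{lem:at-most-two} with Corollary~\ref{cor:affine-plane-all} for $|O|=1$. Two small corrections: Lemma~\ref{lem:two-translates} is not needed in the $|O|=1$ case, and the excluded configuration there is that $R+o$ fails to span $V$ linearly, i.e.\ $R\cup\{o\}$ lies in one affine plane.

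Your closing worry is unfounded. The spanning argument you already wrote closes the $|O|=1$ case, and the two-residue versus one-residue split closes the even coset when $|O|=3$. So no dyadic analogue of the Sidorenko multiplicity bound is required, and the base case did not need Sidorenko for the $|O|=3$ branch either.
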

	
	\begin{proof}
		Let $A$ be a sequence over $C_{2q}^3$ such that
		\[
		|A|=12q-7,
		\qquad
		N_{2q}(A)=N_{6q}(A)=0.
		\]
		Pair equal quotient residues.  The divided sequence $B$ has no $q$- or
		$3q$-term zero sum.  Since $|A|$ is odd,
		$|O|\in\{1,3,5,7\}$.  We prove that the $(6q-3)$-term sumset of $A$ has at
		most one hole.
		
		If $|O|=7$, then $|B|=6q-7$.  For any prescribed quotient sum, choose three
		unpaired residues with that sum.  We need $3q-3$ complete pairs.  The profile
		$H(q)$ leaves at most one carry unavailable.  If a fixed target in
		$C_{2q}^3$ failed for every admissible triple, all associated lifted triple
		sums would be equal, contrary to Lemma~\ref{lem:fano-separation-general}.
		Thus this branch is full.
		
		If $|O|=5$, put $t=\sigma(O)$.  Every quotient target other than $t$ is
		represented by three unpaired terms and $3q-3$ complete pairs; here
		$|B|=6q-6$, so $R(q)$ applies.  For quotient target $t$, use all five
		unpaired terms and $3q-4$ complete pairs.  By
		Lemma~\ref{lem:delete-one-hole}, at most one target in this quotient fiber can
		fail.
		
		Suppose $|O|=3$.  Let $H$ be the plane of even subset sums and $P$ the affine
		plane of odd subset sums.  Targets in $P$ use one or three unpaired terms and
		respectively $3q-2$ or $3q-3$ complete pairs.  These layers are full by
		$P(q)$ and Lemma~\ref{lem:adjacent-layers}.
		
		Consider the other affine plane.  Choose two unpaired terms and break one
		pair.  A pair residue $r$ in this plane covers every quotient point in the
		plane except $r$, because the two-term sums of $O$ are $H\setminus\{0\}$.
		If no pair residue lies there, all terms reduce into one affine plane,
		contrary to Corollary~\ref{cor:affine-plane-all}.  Two distinct pair residues
		cover the whole plane.  If there is only one distinct residue $r$, all
		quotient points except $r$ are covered using two unpaired terms, one broken
		pair, and $3q-3$ complete pairs from a divided sequence of length $6q-6$.
		For quotient residue $r$, break one such pair and use $3q-2$ complete pairs;
		Lemma~\ref{lem:delete-one-hole} leaves at most one target uncovered.
		
		Finally suppose $|O|=1$, with unpaired residue $o$.  Let $R$ be the support
		of the common quotient residues of the pairs, and put $T=R+o$.  The following
		quotient targets are fully represented:
		\begin{itemize}[leftmargin=2em]
			\item $o$, using the unpaired term and $3q-2$ complete pairs;
			\item each $r\in R$, by breaking one pair and using $3q-2$ complete pairs;
			\item each $o+r_i+r_j$, by using the unpaired term, breaking two pairs, and
			using $3q-3$ complete pairs.
		\end{itemize}
		After translation by $o$, the covered quotient set contains $\mathcal A(T)$.
		If $T$ does not span $V$, all terms reduce into one affine plane, contrary to
		Corollary~\ref{cor:affine-plane-all}.  Thus $T$ spans $V$, and
		Lemma~\ref{lem:at-most-two} leaves at most one quotient point.  If such a
		point remains, it is the sum of three distinct elements of $T$.  Break the
		corresponding three pairs and use $3q-3$ complete pairs from the remaining
		length-$(6q-7)$ divided sequence.  The profile $H(q)$ leaves at most one group
		element uncovered.
	\end{proof}
	
	\begin{proposition}\label{prop:lift-R}
		Assume $E(q)$, $P(q)$, $R(q)$, and $H(q)$.  Then $R(2q)$ holds.
	\end{proposition}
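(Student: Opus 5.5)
We follow the template of Theorem~\ref{thm:c4-layer9}, now in the dyadic-lifting setting of Proposition~\ref{prop:lift-H}. Let $A$ be a sequence over $C_{2q}^3$ with $|A|=12q-6$ and $N_{2q}(A)=N_{6q}(A)=0$. We pair equal quotient residues modulo $2$ and let $B$ be the divided pair-sum sequence and $O$ the unpaired residue set. Then $B$ has no $q$- or $3q$-term zero sum, since such a zero sum would lift to a $2q$- or $6q$-term zero sum of $A$. As $|A|$ is even, $|O|\in\{0,2,4,6,8\}$ and $|B|=(12q-6-|O|)/2$. We fix a target $x$ with residue $\bar x$, and we must realize $x$ as a sum of $6q-3$ terms. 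Throughout we use the lifting observation: $h$ unpaired terms, $b$ broken pairs and $m$ complete pairs with $h+b+2m=6q-3$.

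\emph{The cases $|O|\in\{0,4,6,8\}$.}
\begin{itemize}[leftmargin=2em]
\item $|O|=0$. Then $|A|\equiv 2\pmod 4$ forces an odd number of single terms, so we break pairs. This branch is the analogue of the situation where $B$ is large, $|B|=6q-3$. By $E(q)$, $B$ has a $3q$-term zero sum, which gives a $6q$-term zero sum in $A$, a contradiction. Hence this case does not occur.
\item $|O|=6$, so $|B|=6q-6$. We choose three unpaired residues with sum $\bar x$ (possible by Lemma~\ref{lem:f2-geometry}(ii)) and $3q-3$ complete pairs. The carry is absorbed by $R(q)$.
\item $|O|=4$, so $|B|=6q-5$. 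Put $t=\sigma(O)$. If $t=0$, the four lifted unpaired terms sum to $2c$. By Lemma~\ref{lem:adjacent-layers}(ii), $3q-2$ pairs of $B$ sum to $-c$, which produces a forbidden $6q$-term zero sum. Hence $t\ne0$. By Lemma~\ref{lem:f2-geometry}(v), odd subsets of sizes $1$ and $3$ reach every residue. They then need $3q-2$ or $3q-3$ complete pairs, and both layers are full by $P(q)$ and Lemma~\ref{lem:adjacent-layers}(ii).
\item $|O|=8$, so $|B|=6q-7$. We use three unpaired terms and $3q-3$ pairs. By $H(q)$ at most one carry is missing. If every triple $Q\subset V$ with sum $\bar x$ required that missing carry, then all lifted triple sums would be equal. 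Applying Lemma~\ref{lem:fano-separation-general} to the seven-set $O\setminus\{z\}$ for any $z\ne\bar x$, which still contains triples summing to $\bar x$, gives a contradiction. Alternatively, the plane-complement argument of Theorem~\ref{thm:c4-layer9} with multiplicities $7\equiv3\equiv-1\pmod 4$ lifted modulo $2q$ should also work.
\end{itemize}

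\emph{The case $|O|=2$, the main obstacle.} Here $O=\{u,v\}$, $t=u+v\neq0$, and $|B|=6q-4$. With $\Delta=\{0\}\cup\{\rho_i+\rho_j\}$ built from the pair residues, the residues $u$ and $v$ use one unpaired term and $3q-2$ complete pairs; this layer is full by Lemma~\ref{lem:adjacent-layers}(i). The residues $u+\rho_i+\rho_j$ use one unpaired term, two broken pairs and $3q-3$ pairs from a length-$(6q-6)$ divided sequence, which is full by $R(q)$. It remains to handle the failure of \eqref{eq:c4-covering}.
\begin{itemize}[leftmargin=2em]
\item As in Theorem~\ref{thm:c4-layer9}, failure forces $\Delta=H$ to be a plane. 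The pair residues then lie in one coset $P$, and $u,v$ lie in one coset $Q\ne P$; if $P=Q$ we contradict Corollary~\ref{cor:affine-plane-all}.
\item For targets in $P$, we take both unpaired terms, one broken pair and $3q-3$ complete pairs from length $6q-5$. This is full by Lemma~\ref{lem:adjacent-layers}(ii).
\item The hard subcase is $|R|=3$, with a single uncovered residue $y^*\in R$. We break three pairs whose residues sum to $y^*$ and take $3q-3$ complete pairs from the remaining $6q-7$ divided sums. By $H(q)$ at most one carry is missing.
\item We then switch the selected term in a broken pair. This changes the carry unless the pair consists of equal terms. So failure forces every pair to be a doubled singleton, and hence all pairs over a residue class are copies of one element.
\end{itemize}
The concentration in this last step is where care is needed. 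Each residue class would then carry on the order of $2q$ equal terms, and $2q$ equal terms give a $2q$-term zero sum, contradicting $N_{2q}(A)=0$. This needs a count showing that some class has at least $q$ pairs, from $6q-4$ pairs spread over $3$ classes. I expect exactly this counting and switching step to be the most delicate.
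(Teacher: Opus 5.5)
There is a genuine gap in your case $|O|=2$; the cases $|O|\in\{0,4,6,8\}$ match the paper's proof.

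You write ``as in Theorem~\ref{thm:c4-layer9}, failure forces $\Delta=H$ to be a plane.'' That step rests on $|R|\ge3$. In Theorem~\ref{thm:c4-layer9}, $|R|\ge3$ came from there being only eight pairs, with no residue able to carry four of them (Theorem~\ref{thm:sidorenko}). Over $C_{2q}^3$ there are $6q-4$ pairs, and a single residue class may carry very many of them. Nothing in your argument excludes $|R|\in\{1,2\}$.

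For $|R|=2$ one has $\Delta=\{0,r_1+r_2\}$, so $(u+\Delta)\cup(v+\Delta)$ has at most four points. Suppose $u,v,r_1,r_2$ are affinely independent. Then Corollary~\ref{cor:affine-plane-all} does not apply, and nothing you wrote reaches the other four residues. ($|R|=1$ is harmless, since any three points lie in an affine plane.)

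The missing idea is to use two constructions that are already available:
\begin{itemize}
\item A residue $r\in R$ is reached by breaking one pair and taking $3q-2$ complete pairs from the remaining $6q-5$ divided sums. This layer is full by $P(q)$.
\item A residue $t+r$ is reached by both unpaired terms, one broken pair, and $3q-3$ complete pairs. This layer is full by Lemma~\ref{lem:adjacent-layers}(ii).
\end{itemize}
With $T=R+u$, the covered set becomes $u+\mathcal A(T)\cup\bigl(u+t+\mathcal A(T)\bigr)$. Lemma~\ref{lem:two-translates} then covers $V$ whenever $\Span(T\cup\{t\})=V$. Otherwise all residues lie in one affine plane, contradicting Corollary~\ref{cor:affine-plane-all}. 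This uniform argument is exactly the paper's, and it needs no analysis of $\Delta$ or $|R|$ at all.

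Once $R$ is covered directly, your ``hard subcase'' disappears, because $y^*\in R$. As written, that subcase is also unsound. From ``every pair in every admissible triple is a doubled singleton'' it does not follow that all pairs over a residue class are copies of one element. Pairs $a\bdot a$ and $a'\bdot a'$ with $a\equiv a'\pmod 2$ but $a\ne a'$ are allowed. To conclude, you would have to re-pair within the class, and also verify that every pair index lies in some admissible triple; you do neither.

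The counting you single out as delicate is in fact trivial: $6q-4$ pairs over three classes put at least $2q-1$ pairs in one class. The real difficulty is the structural step before it, which the direct $P(q)$ construction for $R$ makes unnecessary.
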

	
	\begin{proof}
		Let $|A|=12q-6$ and suppose
		$N_{2q}(A)=N_{6q}(A)=0$.  The possible values of $|O|$ are
		$0,2,4,6,8$.
		
		If $|O|=0$, then $|B|=6q-3$, and $E(q)$ gives a $3q$-term zero sum in $B$,
		lifting to a forbidden $6q$-term zero sum.
		
		Suppose $|O|=2$, say $O=\{u,v\}$, and put $t=u+v\ne0$.  Let $R$ be the
		support of the pair residues and set $T=R+u$.  Using one unpaired term, one or
		two broken pairs, or both unpaired terms and one broken pair, the quotient
		constructions cover
		\[
		u+\mathcal A(T)
		\quad\text{and}\quad
		u+t+\mathcal A(T).
		\]
		The required lower layers are $3q-2$ and $3q-3$ in divided sequences of
		lengths $6q-5$ and $6q-6$; they are full by $P(q)$, $R(q)$, and
		Lemma~\ref{lem:adjacent-layers}.  If $\Span(T\cup\{t\})=V$,
		Lemma~\ref{lem:two-translates} gives every quotient target.  Otherwise all
		terms reduce into one affine plane, contrary to
		Corollary~\ref{cor:affine-plane-all}.
		
		If $|O|=4$, then $\sigma(O)\ne0$: otherwise the four unpaired terms and
		$3q-2$ suitable pairs form a forbidden $6q$-term zero sum.  By
		Lemma~\ref{lem:f2-geometry}(v), the odd subset sums of $O$, of sizes one and
		three, exhaust $V$.  Use respectively $3q-2$ and $3q-3$ complete pairs.
		
		If $|O|=6$, choose three unpaired terms with any prescribed quotient sum and
		then use $3q-3$ complete pairs from the full layer supplied by $R(q)$.
		
		If $|O|=8$, choose three unpaired terms with the prescribed quotient sum and
		use the one-hole layer $H(q)$ in the divided sequence of length $6q-7$.
		Lemma~\ref{lem:fano-separation-general} rules out simultaneous failure of all
		admissible quotient triples.
	\end{proof}
	
	\begin{proposition}\label{prop:lift-P}
		Assume $E(q)$, $P(q)$, $R(q)$, and $H(q)$.  Then $P(2q)$ holds.
	\end{proposition}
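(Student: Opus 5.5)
We follow the template of Propositions~\ref{prop:lift-H} and~\ref{prop:lift-R}, imitating at level $2q$ the base-case argument of Theorem~\ref{thm:c4-layer10}. Let $A$ be a sequence over $C_{2q}^3$ with $|A|\in\{12q-4,12q-5\}$ and $N_{2q}(A)=N_{6q}(A)=0$. We pair equal quotient residues modulo $2$, let $B$ be the divided pair-sum sequence over $C_q^3$ and $O$ the unpaired residue set. A $q$- or $3q$-term zero sum in $B$ would lift to a forbidden $2q$- or $6q$-term zero sum in $A$, so $N_q(B)=N_{3q}(B)=0$. Fix a target $x\in C_{2q}^3$ with quotient residue $\bar x$. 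We must realize $x$ as a sum of $6q-2$ terms, i.e.\ choose $h$ unpaired terms, $b$ broken pairs and $m$ complete pairs with $h+b+2m=6q-2$. Then we invoke the lifting observation: the carry must be hit by an $m$-term sum of the appropriate divided sequence.

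\textbf{Case $|A|=12q-4$.} Here $|O|\in\{0,2,4,6,8\}$. If $|O|=0$, then $|B|=6q-2$ and $E(q)$ gives a forbidden zero sum. If $|O|=8$, we use four unpaired terms with quotient sum $\bar x$ and $3q-3$ pairs from $|B|=6q-4$, full by Lemma~\ref{lem:adjacent-layers}(i). If $|O|=6$ ($|B|=6q-5$), we use two unpaired terms with $m=3q-2$ when $\bar x\ne0$, and four unpaired terms with $m=3q-3$ when $\bar x=0$; both layers are full by Lemma~\ref{lem:adjacent-layers}(ii). If $|O|=4$ ($|B|=6q-6$), then $\sigma(O)=0$ would give a $6q$-term zero sum using $3q-2$ pairs, because $R(q)$ together with Lemma~\ref{lem:delete-one-hole} and complementation make $\Sigma_{3q-2}(B)$ large enough. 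We must check here that the needed carry is available, and otherwise use the layer $3q-2$ via $P$ after adjoining a term. So $\sigma(O)\ne0$, and Lemma~\ref{lem:f2-geometry}(v) gives even subsets of sizes $0,2,4$ with $m=3q-1,3q-2,3q-3$. These layers for $|B|=6q-6$ come from $R(q)$, Lemma~\ref{lem:delete-one-hole}, and the complement relation $\Sigma_{3q-1}(B)=\sigma(B)-\Sigma_{3q-5}(B)$. If $|O|=2$, we argue as in the $|O|=2$ case of Proposition~\ref{prop:lift-R}, with $T=R+u$, and cover $u+t+\mathcal A(T)$-type sets using even numbers of single terms. Lemma~\ref{lem:two-translates} together with Corollary~\ref{cor:affine-plane-all} then finishes.

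\textbf{Case $|A|=12q-5$.} Here $|O|\in\{1,3,5,7\}$, and we copy the odd-length part of Theorem~\ref{thm:c4-layer10}. For $|O|=7$ and $|O|=5$ we use four, or two/four, unpaired terms with layers supplied by $P(q)$ and Lemma~\ref{lem:adjacent-layers}. For $|O|=3$, targets in the even plane $H$ use zero or two unpaired terms with full layers. Targets in the odd coset use one broken pair of residue in $\bar x+P$ together with $1$ or $3$ unpaired terms; if no such residue exists, Corollary~\ref{cor:affine-plane-all} gives a contradiction. For $|O|=1$ we use the $\mathcal A(T)$ covering with Lemma~\ref{lem:at-most-two}, breaking three pairs for the last residue, where the odd length pushes into layers where $P(q)$ or $E(q)$ give fullness rather than one hole.

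\textbf{Main obstacle.} The hardest step is the careful bookkeeping of which layer $\Sigma_m$ of which divided length is needed in each subcase, and in particular showing that the residual point in the $|O|=1$ and $|O|=2$ branches is fully covered rather than up to one hole. If a one-hole layer appears, we will first try the switching trick from Theorem~\ref{thm:c4-layer9}: exchanging the selected term in a broken pair changes the carry, and failure forces all pairs to be doubled singletons, contradicting the size of $B$. Failing that, we fall back on Lemma~\ref{lem:fano-separation-general}.
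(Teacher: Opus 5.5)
Your overall strategy (pair, divide by $2$, split on $|O|$, absorb the carry in a layer of $B$) is the paper's. However, the length bookkeeping is off in a way that makes one step fail as written and sends you after an obstacle that does not exist. Since $|B|=(|A|-|O|)/2$, the case $|O|=2$ for $|A|=12q-4$ and the case $|O|=1$ for $|A|=12q-5$ both have $|B|=6q-3$. Then $E(q)$ gives a $3q$-term zero sum in $B$, hence a $6q$-term zero sum in $A$, so these cases are impossible exactly like $|O|=0$. The $\mathcal A(T)$-coverings, the switching trick and Lemma~\ref{lem:fano-separation-general} that you reserve for them (your ``main obstacle'') are unnecessary. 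In fact no one-hole layer enters this proof at all, and $H(q)$ is never used.

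The real error is that you swapped the divided lengths for $|O|=4$ and $|O|=8$ when $|A|=12q-4$: $|O|=4$ gives $|B|=6q-4$ and $|O|=8$ gives $|B|=6q-6$. At the length $6q-6$ you assign to $|O|=4$, your argument does not close. Excluding $\sigma(O)=0$ requires the specific carry $-c$ to lie in $\Sigma_{3q-2}(B)$, while Lemma~\ref{lem:delete-one-hole} only bounds the holes there by one. The empty even subset requires $\Sigma_{3q-1}(B)=\sigma(B)-\Sigma_{3q-5}(B)$, a layer that none of $E(q),P(q),R(q),H(q)$ controls at length $6q-6$. At the correct length $6q-4$ everything is immediate: $P(q)$ rules out $\sigma(O)=0$, and Lemma~\ref{lem:adjacent-layers}(i) gives $\Sigma_{3q-3}(B)=\Sigma_{3q-2}(B)=\Sigma_{3q-1}(B)=C_q^3$, which is what even subsets of sizes $4,2,0$ need. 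Correspondingly, the $|O|=8$ branch (four unpaired terms, $3q-3$ pairs) is supplied by $R(q)$, not Lemma~\ref{lem:adjacent-layers}(i). The same holds for $|O|=7$ when $|A|=12q-5$: there $|B|=6q-6$ as well, and the layer $\Sigma_{3q-3}$ comes from $R(q)$ rather than $P(q)$. With these corrections your remaining branches ($|O|=0,6$ and $|O|=3,5$) coincide with the paper's proof.
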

	
	\begin{proof}
		First let $|A|=12q-4$ and suppose
		$N_{2q}(A)=N_{6q}(A)=0$.  The cases $|O|=0,2$ are impossible, since then
		$|B|\ge6q-3$ and $E(q)$ gives a $3q$-term zero sum.
		
		If $|O|=4$, then $\sigma(O)\ne0$, for otherwise all four unpaired terms
		together with $3q-2$ suitable pairs form a $6q$-term zero sum.  The even
		subset sums of $O$, using zero, two, or four terms, exhaust $V$.  The
		corresponding required lower layers are $3q-1$, $3q-2$, and $3q-3$ in a
		divided sequence of length $6q-4$, and all are full by
		Lemma~\ref{lem:adjacent-layers} and $P(q)$.
		
		If $|O|=6$, every quotient target is a sum of either two or four unpaired
		residues.  Use respectively $3q-2$ or $3q-3$ complete pairs in the divided
		sequence of length $6q-5$.  If $|O|=8$, use four unpaired terms and $3q-3$
		complete pairs from $R(q)$.
		
		Now let $|A|=12q-5$.  The case $|O|=1$ is impossible by $E(q)$.  If
		$|O|=5$, every quotient target is a sum of two or four unpaired residues,
		followed by $3q-2$ or $3q-3$ complete pairs.  If $|O|=7$, use four unpaired
		terms and $3q-3$ complete pairs.
		
		It remains to treat $|O|=3$.  Let $H$ be the plane of even subset sums and
		$P$ the plane of odd subset sums.  Targets in $H$ use zero or two unpaired
		terms and the full lower layers $3q-1$ or $3q-2$.  Let $y\notin H$.  If a
		pair has common quotient residue $r\in y+P$, select one term from that pair
		and an odd subset of $O$ with quotient sum $y+r$.  According as the odd
		subset has size one or three, use $3q-2$ or $3q-3$ complete pairs from the
		remaining divided sequence of length $6q-5$.  If no such pair exists, every
		pair residue and every unpaired residue lies in the same affine plane $P$,
		contrary to Corollary~\ref{cor:affine-plane-all}.
	\end{proof}
	
	\begin{lemma}\label{lem:dyadic-no-q}
		Let $q$ be a power of $2$, and let $S$ be a sequence over $C_q^3$ of length
		$6q-3$ containing no zero-sum subsequence of length $3q$.  Then $N_q(S)=0$.
	\end{lemma}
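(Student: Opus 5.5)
The plan is to copy the argument used for \eqref{eq:c4-no4} in the proof of Theorem~\ref{thm:c4-rank3-threshold}, now at a general dyadic exponent. The only tool needed is Lemma~\ref{lem:olson-affine}, which for $G=C_q^3$ applies to sequences of length at least $\mathsf D(C_q^3)+q-1=(3q-2)+q-1=4q-3$.

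\emph{Step 1 (extracting zero sums).} Let $T$ be a sequence over $C_q^3$ with $|T|=4q-3$. Then $\lfloor |T|/q\rfloor=3$, so \eqref{eq:olson-multiple} reads
\[
1-N_q(T)+N_{2q}(T)-N_{3q}(T)\equiv0\pmod2 .
\]
Hence the three counts cannot all vanish. So $T$ has a zero-sum subsequence whose length is $q$, $2q$, or $3q$.

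\emph{Step 2 (a second disjoint $q$-term zero sum).} Suppose, for contradiction, that $Q_1\mid S$ is a zero-sum subsequence of length $q$. Its complement has length $5q-3\ge4q-3$. Choose any $4q-3$ terms from the complement and apply Step~1 to them.
\begin{itemize}[leftmargin=2em]
\item A $3q$-term zero sum is forbidden outright.
\item A $2q$-term zero sum is disjoint from $Q_1$, so together with $Q_1$ it gives a $3q$-term zero sum in $S$, which is also forbidden.
\end{itemize}
Therefore there is a zero-sum subsequence $Q_2$ of length $q$ that is disjoint from $Q_1$.

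\emph{Step 3 (the final contradiction).} Remove $Q_1$ and $Q_2$ from $S$. The remaining sequence has length exactly $6q-3-2q=4q-3$, so Step~1 applies once more and gives a zero-sum subsequence of length $q$, $2q$, or $3q$ disjoint from $Q_1Q_2$.
\begin{itemize}[leftmargin=2em]
\item If its length is $q$, adjoin both $Q_1$ and $Q_2$.
\item If its length is $2q$, adjoin $Q_1$.
\item If its length is $3q$, take it as it is.
\end{itemize}
In every case we obtain a zero-sum subsequence of $S$ of length $3q$, contradicting the hypothesis. Thus $N_q(S)=0$.

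I do not expect a real obstacle here. The one point to check is that the length bookkeeping matches Olson's bound exactly at the last step: the leftover length is $4q-3=\mathsf D(C_q^3)+q-1$, which is precisely the threshold required by Lemma~\ref{lem:olson-affine}.
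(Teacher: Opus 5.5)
Your proposal is correct and is essentially the paper's own proof. You apply Lemma~\ref{lem:olson-affine} to $4q-3$ terms of the complement of $Q_1$ to extract a disjoint $Q_2$, then apply it again to the leftover $4q-3$ terms, and each possible length $q$, $2q$, $3q$ yields a $3q$-term zero sum. One harmless slip: $\lfloor (4q-3)/q\rfloor=3$ only for $q\ge4$, and for $q=2$ it equals $2$, but then $N_{3q}(T)=0$ trivially, so your displayed congruence and the rest of the argument still hold.
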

	
	\begin{proof}
		Suppose $Q_1$ is a $q$-term zero sum.  The complement has length $5q-3$.
		Choose $4q-3$ terms from it and apply Lemma~\ref{lem:olson-affine}.  A
		$2q$-term zero sum together with $Q_1$ gives a $3q$-term zero sum, while a
		$3q$-term zero sum is forbidden.  Hence there is a second disjoint $q$-term
		zero sum $Q_2$.  The remaining sequence has length $4q-3$; another
		application of Lemma~\ref{lem:olson-affine} gives a $q$-, $2q$-, or
		$3q$-term zero sum, and every case produces a $3q$-term zero sum in $S$.
	\end{proof}
	
	\begin{proposition}\label{prop:lift-E}
		Assume $E(q)$, $P(q)$, $R(q)$, and $H(q)$.  Then $E(2q)$ holds.
	\end{proposition}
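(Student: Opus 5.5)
Assume $E(q)$, $P(q)$, $R(q)$, $H(q)$, and let $S$ be a sequence over $C_{2q}^3$ of length $12q-3$ with no zero-sum subsequence of length $6q$. We seek a contradiction. Since $12q-3\equiv 1 \pmod{2q}$ up to the usual shift (more precisely, $|S|-6q\cdot 2$ is odd and coprime to the exponent), translating every term by a fixed element lets us assume $\sigma(S)=0$, exactly as in Theorem~\ref{thm:c4-rank3-threshold}. Lemma~\ref{lem:dyadic-no-q}, applied with $2q$ in place of $q$, gives $N_{2q}(S)=0$. So $S$ has no $2q$- or $6q$-term zero sum, and we pair equal residues modulo $2$. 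This gives a divided sequence $B$ over $C_q^3$ and an unpaired residue set $O$ with $|S|=2|B|+|O|$, $|O|\in\{1,3,5,7\}$ and $\sigma(O)=0$ in $V$. By construction $B$ has no $q$- or $3q$-term zero sum, since these lift to $2q$- and $6q$-term zero sums in $S$.

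We split according to $|O|$.

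\emph{Case $|O|\in\{1,3\}$.} Here $|B|\ge 6q-3$, so $E(q)$ gives a $3q$-term zero sum in $B$. It lifts to $3q$ pairs, hence a $6q$-term zero sum in $S$, a contradiction.

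\emph{Case $|O|=5$.} Since $\sigma(O)=0$, we have $0\in O$ and the other four residues sum to zero. Those four lifted terms have sum $2c$ with $c\in C_q^3$, and $|B|=6q-4$. By $P(q)$ together with Lemma~\ref{lem:adjacent-layers}(i), $\Sigma_{3q-2}(B)=C_q^3$, so $3q-2$ complete pairs have divided sum $-c$. Together with the four unpaired terms they form a zero sum of length $4+6q-4=6q$, a contradiction.

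\emph{Case $|O|=7$.} Then $O=V\setminus\{0\}$ and $|B|=6q-5$. Choose a four-element zero-sum subset of $O$; its lift has sum $2c$. By Lemma~\ref{lem:adjacent-layers}(ii), $\Sigma_{3q-2}(B)=C_q^3$, so $3q-2$ pairs absorb $-c$, again giving a $6q$-term zero sum. One could alternatively use triples of $O$ with zero quotient sum and $3q-3$ pairs via layer $3q-3$, but the four-term choice suffices.

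All cases are contradictory, so $E(2q)$ holds. The main point needing care is the reduction to $\sigma(S)=0$ together with $N_{2q}(S)=0$. The translation step needs $\gcd(|S|-6q,\,2q)$ to allow shifting the sum to zero. Since $|S|=12q-3$ is odd, multiplication by $|S|$ is invertible on $C_{2q}^3$, so we can pick $g$ with $|S|g=-\sigma(S)$. Translating by $g$ changes the sum of any $6q$-term subsequence by $6qg=0$, because $2q\mid 6q$. This preserves the hypothesis, and the remaining steps follow the $C_4^3$ template.
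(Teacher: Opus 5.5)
Your proof is correct and follows the paper's argument essentially verbatim. You reduce to $\sigma(S)=0$ using that $|S|$ is odd and obtain $N_{2q}(S)=0$ from Lemma~\ref{lem:dyadic-no-q}. You then dispose of $|O|\in\{1,3\}$ by $E(q)$, and of $|O|\in\{5,7\}$ by taking a four-element zero-sum subset of $O$ together with $3q-2$ pairs supplied by $P(q)$; here $P(q)$ alone suffices and Lemma~\ref{lem:adjacent-layers} is not needed.

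Two harmless slips should be corrected. First, $12q-3\equiv -3$, not $1$, modulo $2q$. Second, your aside about using a triple with $3q-3$ pairs yields length $3+2(3q-3)=6q-3$, not $6q$, so that alternative does not work. Neither slip affects the argument you actually rely on.
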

	
	\begin{proof}
		Let $S$ be a sequence over $C_{2q}^3$ with $|S|=12q-3$, and suppose
		$N_{6q}(S)=0$.  By Lemma~\ref{lem:dyadic-no-q}, applied at exponent $2q$,
		one has $N_{2q}(S)=0$.
		
		Since multiplication by $3$ is an automorphism of $C_{2q}^3$, translate $S$
		so that $\sigma(S)=0$.  Translation preserves all zero-sum lengths divisible
		by $2q$.  After pairing, $\sigma(O)=0$ and
		$|O|\in\{1,3,5,7\}$.
		
		If $|O|=1$ or $3$, then $|B|\ge6q-3$, and $E(q)$ gives a $3q$-term zero sum,
		lifting to a $6q$-term zero sum.
		
		If $|O|=5$, then $0\in O$ and $O\setminus\{0\}$ is a four-element zero-sum
		set.  The four corresponding lifted terms have sum $2c$ for some
		$c\in C_q^3$.  Here $|B|=6q-4$, and $P(q)$ supplies $3q-2$ terms of $B$ with
		sum $-c$.  Their pairs and the four unpaired terms form a $6q$-term zero sum.
		
		If $|O|=7$, then $O=V\setminus\{0\}$.  Choose a four-element zero-sum subset
		of $O$ and argue identically, using the length-$(6q-5)$ case of $P(q)$.
	\end{proof}
	
	\begin{theorem}\label{thm:simultaneous-lifting}
		For every dyadic $q\ge4$,
		\[
		E(q),P(q),R(q),H(q)
		\quad\Longrightarrow\quad
		E(2q),P(2q),R(2q),H(2q).
		\]
	\end{theorem}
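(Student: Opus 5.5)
The theorem is the conjunction of Propositions~\ref{prop:lift-H}, \ref{prop:lift-R}, \ref{prop:lift-P}, and \ref{prop:lift-E}. Each of these has exactly the hypothesis $E(q),P(q),R(q),H(q)$ and concludes one of the four assertions at $2q$. So the plan is simply to invoke them in turn. We should also check that no proposition silently uses one of the lifted assertions at level $2q$ as an input, since that would create a circularity.

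Going through them individually:
\begin{enumerate}[label=\textup{(\arabic*)}]
\item Proposition~\ref{prop:lift-H} uses $H(q)$, $R(q)$, $P(q)$, and Lemma~\ref{lem:adjacent-layers}, which rests on $E(q)$ and $P(q)$. It also uses Lemma~\ref{lem:delete-one-hole} (from $H(q)$), Corollary~\ref{cor:affine-plane-all}, Lemmas~\ref{lem:at-most-two} and \ref{lem:fano-separation-general}. All of these are level-$q$ data or unconditional.
\item Proposition~\ref{prop:lift-R} and Proposition~\ref{prop:lift-P} likewise use only level-$q$ profiles and the unconditional quotient geometry.
\item Proposition~\ref{prop:lift-E} uses Lemma~\ref{lem:dyadic-no-q} at exponent $2q$. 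That lemma is unconditional: it needs only Lemma~\ref{lem:olson-affine} for $C_{2q}^3$. The proposition then uses $E(q)$ and $P(q)$ for the divided sequence over $C_q^3$.
\end{enumerate}
Hence no assertion at level $2q$ is an input to any of the others. They can be derived in any order, and their conjunction is the claim.

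The only point requiring care is bookkeeping, not mathematics. Each proposition must apply the level-$q$ profiles to the divided pair-sum sequence $B$ over $C_q^3$, and this needs $N_q(B)=N_{3q}(B)=0$. That follows because a $q$- or $3q$-term zero sum in $B$ lifts to a $2q$- or $6q$-term zero sum in $A$, and these are excluded by hypothesis, or by Lemma~\ref{lem:dyadic-no-q} in the case of $E(2q)$. We should also confirm that the lengths of $B$ produced in each branch ($6q-3$ through $6q-7$) match the lengths in the definitions of $E$, $P$, $R$, $H$. I expect this length matching, rather than any new argument, to be the only step needing verification. Combined with the base case \eqref{eq:four-base-profiles}, induction then gives all four assertions for every dyadic $q\ge4$.
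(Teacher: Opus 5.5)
Your proposal is correct and is the paper's proof: the paper establishes the theorem simply by combining Propositions~\ref{prop:lift-H}, \ref{prop:lift-R}, \ref{prop:lift-P}, and \ref{prop:lift-E}. Your extra checks are accurate and left implicit in the paper: no level-$2q$ assertion feeds into another, Lemma~\ref{lem:dyadic-no-q} at exponent $2q$ is unconditional, and $N_q(B)=N_{3q}(B)=0$ lifts from $N_{2q}=N_{6q}=0$.
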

	
	\begin{proof}
		Combine Propositions~\ref{prop:lift-H}, \ref{prop:lift-R},
		\ref{prop:lift-P}, and \ref{prop:lift-E}.
	\end{proof}
	
	\begin{proof}[Proof of Theorem~\ref{thm:intro-dyadic}]
		For $n=1$, Theorem~\ref{thm:sidorenko} gives
		$\mathsf s_6(C_2^3)=9$.  For $n\ge2$, the four base profiles in
		\eqref{eq:four-base-profiles} hold, and
		Theorem~\ref{thm:simultaneous-lifting} propagates $E(q)$ through all dyadic
		exponents.  Hence
		\[
		\mathsf s_{3q}(C_q^3)\le6q-3.
		\]
		The reverse inequality is \eqref{eq:standard-lower}, since
		$D(C_q^3)=3q-2$.  The values for $k\ge4$ are given by
		Theorem~\ref{thm:known-rank-three}(i).
	\end{proof}
	
	\section{Proof of Theorem \ref{thm:intro-ternary}}\label{sec:ternary-rank-three}
	
	Let $q=3^n$ and $G=C_q^3$.  The values at multipliers $3$ and $5$ are known,
	as are all multipliers at least $6$.  We settle the remaining multiplier
	$4$.
	
	\begin{proof}[Proof of Theorem~\ref{thm:intro-ternary}]
		Let $q=3^n$.  The standard lower bound is
		\[
		4q+D(C_q^3)-1=4q+(3q-2)-1=7q-3.
		\]
		Assume that $S$ is a sequence of length $7q-3$ with no $4q$-term zero sum,
		and set
		\[
		A_j=N_{jq}(S),
		\qquad A_0=1.
		\]
		
		\emph{Step 1: $A_1=0$.}
		If $Q$ were a $q$-term zero sum, the complement would have length $6q-3$.
		The known exact multiplier-three value would give a disjoint $3q$-term zero
		sum, and their union would have length $4q$.
		
		\emph{Step 2: $A_6=0$.}
		Suppose that $U\mid S$ were a $6q$-term zero sum.  It contains no zero-sum
		subsequence of length $q$ or $4q$.  A $2q$-term zero sum would have a
		$4q$-term zero-sum complement in $U$, while a $5q$-term zero sum would have a
		$q$-term zero-sum complement.  Thus $U$ has no zero-sum subsequence of any
		length in $\{q,2q,4q,5q\}$. This contradicts Lemma~\ref{lem:ternary-four-length}, since
		$6q\ge\frac{17}{3}q-3$.  Hence
		\begin{equation}\label{eq:ternary-vanishing}
			A_1=A_4=A_6=0.
		\end{equation}
		
		\emph{Step 3:}
		Apply Theorem~\ref{thm:weighted-moments} with $p=3$, $m=1$, $t=3$, and
		$\alpha=0$.  The required length is
		\[
		\frac{2q}{3}+3q-1+(3q-2)=\frac{20q}{3}-3\le7q-3.
		\]
		Since $q\equiv3\pmod6$, one has $2^q\equiv-1\pmod9$.  Put $a=A_2$, $b=A_3$, and $c=A_5$.
		Combining with \eqref{eq:ternary-vanishing}, we have
		\begin{align}
			1+a-b-c&\equiv0\pmod9,\label{eq:t9-0}\\
			2a-3b-5c&\equiv0\pmod9,\label{eq:t9-1}\\
			4a-25c&\equiv0\pmod9.\label{eq:t9-2}
		\end{align}
		Equation \eqref{eq:t9-2} is equivalent to $2a+c\equiv0\pmod9$.
		Substituting this into \eqref{eq:t9-1} gives $b\equiv a\pmod3$, and reducing
		\eqref{eq:t9-0} modulo $3$ yields
		\begin{equation}\label{eq:ternary-residues}
			a\equiv b\equiv c\equiv1\pmod3.
		\end{equation}
		
		\emph{Step 4:}
		For every subsequence $T\mid S$ of length $4q-3$,
		Lemma~\ref{lem:olson-affine} gives
		\[
		1-N_q(T)+N_{2q}(T)-N_{3q}(T)\equiv0\pmod3.
		\]
		Sum this congruence over all such $T$.  A fixed $jq$-term zero sum of $S$ is
		counted
		\[
		\binom{7q-3-jq}{4q-3-jq}
		\]
		times.  Lucas' theorem gives
		\[
		\binom{7q-3}{4q-3}\equiv2,
		\quad
		\binom{6q-3}{3q-3}\equiv1,
		\quad
		\binom{5q-3}{2q-3}\equiv1,
		\quad
		\binom{4q-3}{q-3}\equiv1
		\pmod3.
		\]
		Consequently,
		\[
		2-A_1+A_2-A_3\equiv0\pmod3.
		\]
		By $A_1=0$ and \eqref{eq:ternary-residues}, the left side is congruent to
		$2$, a contradiction.  Thus every sequence of length $7q-3$ contains a
		$4q$-term zero sum.
	\end{proof}
	
	\begin{proof}[Proof of Theorem~\ref{thm:intro-rank-three}]
		For powers of $2$, use Theorem~\ref{thm:intro-dyadic}.  For powers of $3$,
		Theorem~\ref{thm:intro-ternary} supplies the only missing multiplier, while
		Theorem~\ref{thm:known-rank-three}(ii) supplies the others.  For primes at
		least $5$, use Theorem~\ref{thm:known-rank-three}(iii).  The lower bound is
		\eqref{eq:homocyclic-lower}.
	\end{proof}
	
	\medskip
	\noindent\textbf{Statement of AI.} The authors are completely responsible for the proofs.
	ChatGPT was used to assist with routine calculations and checks, language
	editing, and reorganization of the manuscript.
	
	\medskip
	\noindent\textbf{Acknowledgment.} This work was supported by Guangdong
	Basic and Applied Basic Research Foundation Grant No.~2024A1515012564.

\end{document}